\newif\iffull
\newtheorem{theorem}{Theorem}[section]
\newtheorem{lemma}[theorem]{Lemma}
\theoremstyle{definition}
\theoremstyle{remark}
\numberwithin{equation}{section}
\def\QQ{\mathbb Q}
\def\ZZ{\mathbb Z}
\newcommand{\klammern}[4][]%
{\ifthenelse{\equal{#1}{}}{\left#2}{\csname#1\endcsname#2}%
#4\ifthenelse{\equal{#1}{}}{\right#3}{\csname#1\endcsname#3}}
\newcommand{\conj}[1]{^{(#1)}}
\newcommand{\abs}[1]{\left\vert#1\right\vert}
\date{\today}
\begin{document}

\title[Repdigits and Consecutive Shifted Tribonacci numbers]{\small Repdigits as Product of Consecutive Shifted Tribonacci Numbers}

\author[Pranabesh Das]{Pranabesh Das}
\address{Department of Mathematics, Xavier University of Louisiana, 
	1 Drexel Drive, New Orleans, Louisiana, 70125 USA}
\email{pdas@xula.edu}

\author{Salah Eddine Rihane}
\address{National Higher School of Mathematics, Scientific and Technology Hub of Sidi Abdellah, P.O. Box 75, Algiers 16093, Algeria.}
\email{salahrihane@hotmail.fr}

\author{Alain Togb\'e}
\address{Department of Mathematics, Statistics, and Computer Science, Purdue University Northwest, 1401 S, U.S. 421, Westville IN 46391, USA}
\email{ atogbe@pnw.edu}


\subjclass[2020]{11B39, 11J86.}

\keywords{Tribonacci numbers, Linear form in logarithms, reduction method.}

\begin{abstract}
A repdigit is a positive integer that has only one distinct digit in its decimal expansion, i.e., a number has the form $d(10^m-1)/9$ for some $m\geq 1$ and $1 \leq d \leq 9$.  Let $\left(T_n\right)_{n\ge0}$ be the Tribonacci sequence. This paper deals with the presence of repdigits in the products of consecutive shifted Tribonacci numbers.
\end{abstract}

\maketitle

\section{Introduction}\label{sec1}

A positive integer is called a \textit{repdigit} if it has only one
distinct digit in its decimal expansion. The sequence of numbers with
repeated digits is included in Sloane's {\it On-Line Encyclopedia of
	Integer Sequences} (OEIS) \cite{slone} as sequence  A010785. 

On the other hand, the sequence $( T_n)_{n\geq 0}$ of Tribonacci numbers is given by 
$$T_0=0,~T_1=T_2=1,\;\; \mbox{and} \;\; T_{n+3}=T_{n+2}+T_{n+1}+T_n,$$
for all $n\ge 0$. This is the A000073  sequence  in Sloane's {\it On-Line Encyclopedia of Integer Sequences} (OEIS) \cite{slone}.\\

Finding some specific properties of sequences is of big interest since the famous result of Bugeaud, Mignotte, Siksek \cite{BMS}. One can also see \cite{BGL} - \cite{MT}, \cite{RP} - \cite{RT}. Marques and the second author \cite{MT} studied repdigits as products of consecutive Fibonacci numbers. Irmak and the second author \cite{IT} studied repdigits as products of consecutive Lucas numbers. Rayaguru and Panda \cite{RP} studied repdigits as products of consecutive Balancing and Lucas-Balancing numbers. 
Bravo, G\'omez and Luca \cite{BGL} proved that if the product $T_n\cdots T_{n+\ell-1}$ is a repdigit with at least two digits, then $(n, \ell, m, d) = (8, 1, 2, 4)$, i.e. $T_8 = 44$.
It is natural to ask what will happen if we consider consecutive shifted Tribonacci numbers. This is the aim of this paper.\\

Therefore, in this paper, we investigate  repdigits that are written as the product of consecutive shifted Tribonacci numbers. More precisely, we prove the following results.

\begin{theorem}\label{main:thm}
	The Diophantine equation
	\begin{equation}\label{main:eqn}
		(T_{n}+1)\cdots (T_{n+\left(\ell-1\right)}+1)=d\left(\frac{10^{m}-1}{9}\right), 
	\end{equation}
	has no solution in positive integers $n,\ell,m,d,$ with $1\leq d\leq 9$ and $m\geq 2$.
\end{theorem}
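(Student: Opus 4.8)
My plan is to reduce the equation \eqref{main:eqn} to finitely many explicit cases: first bound the number $\ell$ of factors by an elementary $2$-adic argument, then bound $n$ (hence $m$) via linear forms in logarithms, and finally lower the resulting bound by a Baker--Davenport reduction. \emph{Bounding $\ell$.} Since $(10^m-1)/9$ is odd, \eqref{main:eqn} forces the $2$-adic valuation of the left-hand side to equal $v_2(d)\le 3$. An easy induction shows that $T_k\bmod 2$ has period $4$, with $T_k$ odd exactly when $k\equiv 1,2\pmod 4$; hence among any $\ell$ consecutive indices at least $2\lfloor\ell/4\rfloor$ of the factors $T_{n+i}+1$ are even. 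If $\ell\ge 8$ this number is $\ge 4>3$, a contradiction, so $\ell\le 7$, and from now on I fix $\ell\in\{1,\dots,7\}$ and $d\in\{1,\dots,9\}$.

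\emph{The linear form.} Let $\alpha$ be the dominant root of $x^3-x^2-x-1$ and write $T_k=c_\alpha\alpha^k+c_\beta\beta^k+c_\gamma\gamma^k$ with $c_\alpha=\alpha/(3\alpha^2-2\alpha-1)$, so that $|T_k-c_\alpha\alpha^k|<\tfrac12$ for all $k\ge 1$. Then $T_{n+i}+1=c_\alpha\alpha^{n+i}(1+\eta_i)$ with $0<\eta_i<\tfrac{3}{2c_\alpha}\alpha^{-n-i}$, and writing $N=\ell n+\binom{\ell}{2}$,
\[
P:=\prod_{i=0}^{\ell-1}(T_{n+i}+1)=c_\alpha^{\ell}\alpha^{N}\prod_{i=0}^{\ell-1}(1+\eta_i)=c_\alpha^{\ell}\alpha^{N}(1+\zeta),\qquad 0\le\zeta<C_0\alpha^{-n},
\]
with $C_0$ explicit (valid once $n$ exceeds a small explicit threshold). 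Substituting $P=\tfrac{d}{9}(10^m-1)$ and dividing by $c_\alpha^\ell\alpha^N$ gives, for $\Lambda:=m\log10-N\log\alpha-\ell\log c_\alpha+\log(d/9)$, the identity $e^{\Lambda}=1+\zeta+\tfrac{d}{9c_\alpha^\ell\alpha^N}>1$; in particular $\Lambda\neq 0$, and since $c_\alpha^\ell\alpha^N\ge c_\alpha\alpha^n$ for $n\ge1$,
\[
0<\Lambda<e^{\Lambda}-1<C_1\alpha^{-n}
\]
for an explicit $C_1$. Moreover $10^{m-1}\le P<10^m$ forces $m<N$ (for $n\ge3$), so $\max(m,N,\ell)=N\le 7n+21$.

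\emph{Bounding $n$ and concluding.} The form $\Lambda$ is a nonzero linear combination of the four logarithms $\log10,\ \log\alpha,\ \log c_\alpha,\ \log(d/9)$, with algebraic entries in the cubic field $\QQ(\alpha)$; the relevant logarithmic heights are absolute constants and the coefficient bound is $N\le 7n+21$, so a lower bound for linear forms in logarithms (Matveev) gives $\log|\Lambda|>-C(1+\log n)$ for an effective $C$. Comparing with $\log\Lambda<\log C_1-n\log\alpha$ yields an inequality $n<a+b\log n$, hence an absolute (large) bound $n<M$. For each of the at most $63$ pairs $(\ell,d)$ I then rewrite the previous estimate as $|m-n\vartheta_\ell+\psi_{\ell,d}|<(C_1/\log10)\alpha^{-n}$ with $\vartheta_\ell=\ell\log\alpha/\log10$ irrational and $\psi_{\ell,d}$ explicit, and apply the Baker--Davenport (Dujella--Peth\H{o}) lemma with a suitable continued-fraction convergent of $\vartheta_\ell$ of denominator $>6M$; this replaces $M$ by a small bound $n\le n_1$. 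Finally, for every triple $(\ell,d,n)$ with $\ell\le7$, $d\le9$, $n\le n_1$ the integer $P$ is explicit, and one checks directly that none of these is a repdigit with at least two digits, contradicting \eqref{main:eqn}.

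The conceptual crux is the first step: without the bound $\ell\le7$ the equation carries an unbounded parameter and the $\log$-form machinery cannot even be set up, whereas the $2$-adic observation confines $\ell$ to a short list at once. The remaining difficulty is bookkeeping --- controlling the product error $\prod(1+\eta_i)=1+\zeta$ uniformly in $\ell$ and $n$, and running Matveev's estimate and the reduction lemma over all pairs $(\ell,d)$ --- which I expect to be routine but lengthy.
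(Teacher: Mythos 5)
Your proposal is correct and follows the same overall strategy as the paper (a $2$-adic bound on $\ell$, Matveev's theorem to bound $n$, then a Baker--Davenport/de Weger type reduction and a finite check), but two of your steps are genuinely simpler. For the bound $\ell\le 7$ the paper invokes the Fac\'o--Marques formulas for $\upsilon_2(T_n+1)$ and runs a case analysis on $n$ modulo $64$, tabulating the exact $2$-adic valuation of the product for each residue class; you instead only use the period-$4$ parity pattern of $T_k$ (odd exactly for $k\equiv 1,2\pmod 4$), so that any $8$ consecutive factors contain at least four even ones, which already beats $\upsilon_2(d)\le 3$ --- this is weaker information but entirely sufficient for the theorem, and much lighter. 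For the nonvanishing of the linear form you observe $e^{\Lambda}=1+\zeta+d/(9c_\alpha^{\ell}\alpha^{N})>1$, hence $\Lambda>0$, whereas the paper argues by applying the conjugation $\alpha\mapsto\beta$; your positivity argument is cleaner and also hands you $0<\Lambda<e^{\Lambda}-1$ for free. The remaining differences are cosmetic: you apply Matveev with four logarithms ($10,\alpha,c_\alpha,d/9$) where the paper groups $d/(9c_\alpha^{\ell})$ into one algebraic number and uses three, which only changes the effective constants, and your reduction (Dujella--Peth\H{o} form, run over the $63$ pairs $(\ell,d)$) is equivalent to the paper's use of de Weger's lemma; as in the paper, one should expect to iterate the reduction or choose large enough convergents, and the final computation over $n\le n_1$, $\ell\le 7$, $d\le 9$ is the same finite check. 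A small point in your favour: your Binet coefficient $c_\alpha=\alpha/(3\alpha^2-2\alpha-1)$ is the correct normalization for $T_0=0$, $T_1=T_2=1$, consistent with your error bound $|T_k-c_\alpha\alpha^k|<\tfrac12$.
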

\begin{theorem}\label{main:thm2}
	The Diophantine equation
	\begin{equation}\label{main:eqn2}
		(T_{n}-1)\cdots (T_{n+\left(\ell-1\right)}-1)=d\left(\frac{10^{m}-1}{9}\right), 
	\end{equation}
	has no solutions in positive integers $n,\ell,m,d,$ with $1\leq d\leq 9$ and $m\geq 2$.
\end{theorem}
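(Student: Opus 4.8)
The plan is to follow the same three-stage route as for Theorem~\ref{main:thm}: bound the number of factors $\ell$, then bound $n$ by a linear form in logarithms, and finally clear the resulting finite range by a reduction step and a direct search. Write $\alpha=1.839\ldots$ for the real root of $x^{3}-x^{2}-x-1$ and $\beta,\gamma$ for the complex conjugate roots, so that $\abs{\beta}=\abs{\gamma}=\alpha^{-1/2}<1$, and use the Binet-type formula $T_{n}=c_{\alpha}\alpha^{n}+c_{\beta}\beta^{n}+c_{\gamma}\gamma^{n}$ (with $c_{\alpha}=\alpha/(3\alpha^{2}-2\alpha-1)>0$) together with the standard estimates $\alpha^{n-2}\le T_{n}\le\alpha^{n-1}$ and $\abs{T_{n}-c_{\alpha}\alpha^{n}}<\tfrac12$ for $n\ge1$. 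Since $T_{1}-1=T_{2}-1=0$ and $T_{0}-1=-1$, every solution of \eqref{main:eqn2} has $n\ge3$ (the finitely many remaining small configurations being checked directly at the end).

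First I would bound $\ell$. The arithmetic input is that a repdigit $d(10^{m}-1)/9$ with $m\ge2$ is never divisible by $10$ and has $v_{2}\bigl(d(10^{m}-1)/9\bigr)=v_{2}(d)\le3$. On the Tribonacci side, $T_{n}-1$ is even precisely for $n\equiv1,2\pmod4$, and reducing $(T_{n})$ modulo small powers of $2$ shows $v_{2}(T_{n}-1)\ge2$ for $n\equiv1,2,6\pmod8$ and $v_{2}(T_{n}-1)\ge4$ for $n\equiv1\pmod8$ with $n>1$. Counting how many such indices must occur in a block of $\ell$ consecutive integers forces $v_{2}$ of the product above $3$ once $\ell$ is large --- already $\ell\ge7$ is impossible --- and a short case analysis modulo $8$ then leaves only finitely many pairs $(\ell,n\bmod8)$; these can be trimmed further by observing that, because $10\nmid d(10^{m}-1)/9$, once a factor $T_{n+j}-1$ is even no factor may be divisible by $5$, which is incompatible with the period of $(T_{n})$ modulo $5$ unless $\ell$ is very small. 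This reduces the theorem to finitely many pairs $(\ell,d)$ with $\ell$ below an explicit small bound.

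Next, fix such an $\ell$. Writing $T_{n+j}-1=c_{\alpha}\alpha^{n+j}(1+\eta_{n+j})$ with $\abs{\eta_{n+j}}\ll\alpha^{-(n+j)}$ gives
\[
\prod_{j=0}^{\ell-1}(T_{n+j}-1)=c_{\alpha}^{\ell}\,\alpha^{N}(1+\theta_{n}),\qquad N:=\ell n+\tfrac{\ell(\ell-1)}{2},\quad \abs{\theta_{n}}\ll\alpha^{-n},
\]
so \eqref{main:eqn2} becomes $\bigl|\,\tfrac{d}{9c_{\alpha}^{\ell}}\,10^{m}\alpha^{-N}-1\,\bigr|\ll\alpha^{-n}$. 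The estimates $T_{n+j}-1\asymp\alpha^{n+j}$ show $m\asymp\ell n$, hence $\max\{m,n\}\ll n$. Applying Matveev's theorem to
\[
\Lambda=m\log10-N\log\alpha+\log\!\frac{d}{9c_{\alpha}^{\ell}},
\]
which is non-zero (an equality $d\cdot10^{m}=9c_{\alpha}^{\ell}\alpha^{N}$ is impossible: apply the embedding $\alpha\mapsto\beta$ and take absolute values, using $\abs{\beta}<1$), yields $\abs{\Lambda}>\exp(-C\log n)$ with $C$ depending only on $\ell$ and $d$, hence absolute. Comparing with $\abs{\Lambda}\ll\alpha^{-n}$ gives an effective bound $n<n_{0}$.

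Finally, $n_{0}$ is far too large for a direct check, so I would reduce it. Dividing $\Lambda$ by $\log10$ casts the inequality as $\abs{m-N\tau-\rho}<C_{1}\alpha^{-N/\ell}$ with $\tau=\log\alpha/\log10$ irrational and $\rho=-\log(d/(9c_{\alpha}^{\ell}))/\log10$; the Baker--Davenport / Dujella--Peth\H{o} reduction lemma, fed the continued fraction expansion of $\tau$, then lowers the bound on $n$ to a few hundred for each of the finitely many pairs $(\ell,d)$. A direct computation over the remaining values of $(n,\ell,m,d)$, together with the small configurations set aside at the start, completes the proof. \emph{I expect Step~1 to be the main obstacle}: the $2$-adic (and auxiliary $5$-adic) bookkeeping that bounds $\ell$ and isolates the surviving residue classes is the least routine part of the argument, whereas the rest is the now-standard Baker-method machinery, the only delicate point there being to keep the error $\theta_{n}$ uniformly of size $O(\alpha^{-n})$ so that $\Lambda$ is genuinely that small.
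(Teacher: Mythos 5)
Your proposal follows essentially the same route as the paper: a $2$-adic valuation comparison (the paper quotes the Fac\'o--Marques formula for $\upsilon_2(T_n-1)$ to get $\ell\le 6$, where you verify the same congruence facts modulo small powers of $2$), then Matveev's theorem applied to the same three-term linear form $m\log 10-N\log\alpha+\log\bigl(d/(9c_{\alpha}^{\ell})\bigr)$ with the same non-vanishing argument via the embedding $\alpha\mapsto\beta$, and finally a continued-fraction reduction (de Weger's lemma in the paper, Baker--Davenport/Dujella--Peth\H{o} in your sketch, which are interchangeable here) followed by a finite computation. The remaining differences are cosmetic --- your auxiliary $5$-adic remark is unnecessary, and your error bound $O(\alpha^{-n})$ (rather than the paper's stated $\alpha^{-3n/2}$) is in fact the more careful accounting of the $-1$ shifts --- so the proposal is correct and matches the paper's proof.
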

\begin{theorem}\label{main:thm3}
	The Diophantine equation
	\begin{equation}\label{main:eqn3}
		(T_{n}-1)\cdots (T_{n+\left(k-1\right)}-1)(T_{n+k}+1) \cdots (T_{n+k+\left(\ell -1\right)}+1)=d\left(\frac{10^{m}-1}{9}\right),
	\end{equation}
	has no solution in positive integers $n,\ell,m,d,$ with $1\leq d\leq 9$ and $m\geq 2$.
\end{theorem}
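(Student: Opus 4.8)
The plan is to follow the blueprint behind Theorems~\ref{main:thm} and~\ref{main:thm2}: first bound the number of factors by a $2$-adic valuation argument, then bound $n$ (and hence $m$) by a linear form in logarithms together with a reduction step, and finally settle the remaining finitely many small cases by a direct computation.

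First I would remove the degenerate shapes: if $k=0$ then \eqref{main:eqn3} is \eqref{main:eqn}, and if $\ell=0$ it is \eqref{main:eqn2}, so I may assume $k\ge 1$ and $\ell\ge 1$ and put $s:=k+\ell\ge 2$. Write $P$ for the left-hand side of \eqref{main:eqn3}, $R_m:=(10^m-1)/9$, and let $v_2$ denote the $2$-adic valuation. Since $R_m$ is odd and $1\le d\le 9$, we have $v_2(P)=v_2(d)\le v_2(8)=3$. Modulo $2$ the Tribonacci sequence is periodic with period $4$ and pattern $(0,1,1,0)$, so $T_j$ is odd exactly when $j\equiv 1$ or $2\pmod 4$, and for such $j$ both $T_j-1$ and $T_j+1$ are even. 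Among the $s$ consecutive indices $n,n+1,\dots,n+s-1$, at least $\lfloor s/2\rfloor$ lie in those two residue classes, so each contributes a factor $2$ to $P$ and $v_2(P)\ge\lfloor s/2\rfloor$; combined with $v_2(P)\le 3$ this forces $s\le 7$. (A sharper count, noting e.g.\ that $v_2(T_4)=v_2(T_8)=2$, would give a smaller bound, but $s\le 7$ suffices.) In particular only finitely many shapes $(k,\ell)$ with $k,\ell\ge1$ survive.

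Now fix one such $(k,\ell)$. Let $\alpha>1$ be the dominant root of $x^3-x^2-x-1$ (the other two roots having modulus $<1$), and let $c=\alpha/(3\alpha^2-2\alpha-1)$ be the coefficient of $\alpha^j$ in the Binet formula for $T_j$; it is an algebraic number of degree $3$, and $|T_j-c\alpha^j|<\tfrac12$ for $j\ge 1$. Hence each factor of $P$ can be written $T_{n+i}\pm 1=c\alpha^{n+i}(1+\eta_i)$ with $|\eta_i|=O(\alpha^{-n})$, and multiplying the $s$ of them gives $P=c^{s}\alpha^{\,sn+s(s-1)/2}\bigl(1+O(\alpha^{-n})\bigr)$. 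Comparing with $P=dR_m=\tfrac{d}{9}(10^m-1)$ and taking logarithms yields a three-term linear form
\[
\Lambda:=m\log 10-\left(sn+\frac{s(s-1)}{2}\right)\log\alpha-\log\frac{9c^{s}}{d},\qquad 0<|\Lambda|<C_1\,\alpha^{-n},
\]
with $C_1=C_1(s)$ explicit; here one uses $10^m\asymp P\asymp\alpha^{sn}$ with $s\ge2$, so that $10^{-m}=O(\alpha^{-2n})$ is absorbed into the error, and $\Lambda\ne 0$ follows from a Galois-conjugation argument in $\QQ(\alpha)$ (if $\Lambda=0$ then $d\,10^m=9c^s\alpha^{sn+s(s-1)/2}$, and applying the automorphism sending $\alpha$ to a non-dominant root forces the integer $d\,10^m$ to have modulus $<1$ once $n$ is large). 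The three logarithms are of algebraic numbers of degree at most $3$ with heights bounded explicitly in terms of $s\le 7$ and $d\le 9$, so a lower bound for linear forms in logarithms (Matveev's theorem) yields $|\Lambda|>\exp\!\bigl(-C_2(1+\log n)\bigr)$ with $C_2$ explicit. Comparing the two estimates gives $n<C_3$ and then $m<C_4$, with explicit (large) constants.

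These bounds are far too large for a direct search, so I would run a Baker--Davenport / LLL reduction: writing $\Lambda/\log 10=m-\mu$, where $\mu$ is a fixed inhomogeneous linear form in $\log\alpha/\log 10$ and $\log(9c^{s}/d)/\log 10$, the inequality $|\Lambda|<C_1\alpha^{-n}$ forces $n$ down to a small bound $n\le n_0$ (iterating the reduction once or twice if needed). For each admissible $(k,\ell)$ with $k,\ell\ge 1$ and $k+\ell\le 7$, each $n\le n_0$, and each $m\ge 2$ in the correspondingly bounded range, one computes $P$ and checks directly that it is never of the form $d(10^m-1)/9$; the small $n$, where the Binet approximation is weak, are covered by this same verification. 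This exhausts all cases. The main obstacle I anticipate is not any single estimate but the uniform bookkeeping---controlling the degree and height of $9c^{s}/d$ uniformly over $s\le 7$ and $1\le d\le 9$, establishing $\Lambda\ne0$ cleanly, and carrying out (and iterating) the reduction for each of the finitely many shapes $(k,\ell)$---together with making sure the small-$n$ regime is genuinely dispatched by the final computation rather than lost in the asymptotics, which is why the Binet error term should be kept explicit throughout.
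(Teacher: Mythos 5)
Your proposal is correct and follows the same overall blueprint as the paper's proof of Theorem \ref{main:thm3}: approximate each factor by the dominant Binet term, reduce \eqref{main:eqn3} to a three-logarithm linear form in $\log 10$, $\log\alpha$ and the logarithm of an algebraic constant of the shape $d/(9c^{k+\ell})$, rule out vanishing by conjugating $\alpha$ to a non-dominant root, apply Matveev, then a Baker--Davenport/de Weger reduction (iterated), and finish with a finite computation. The genuine difference is the first step. The paper bounds the two blocks separately via the Fac\'o--Marques $2$-adic order formulas (Lemma \ref{orderp}), importing Lemmas \ref{lem:maj-l} and \ref{lem:maj-k} to get $k\le 6$ and $\ell\le 7$ (so $k+\ell$ up to $13$ enters its height and $B$ estimates), whereas you use only the parity of Tribonacci numbers: since $T_j$ is odd exactly when $j\equiv 1,2\pmod 4$, any $8$ consecutive indices contain four odd $T_j$, each of which makes $T_j\pm1$ even, so $\upsilon_2$ of the product is at least $4>\upsilon_2(d)$ and hence $k+\ell\le 7$. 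This is more elementary (no need for the full $2$-adic valuation lemma) and in fact sharper on the total length, which slightly shrinks the heights, the Matveev parameter $B$, and the final search; it also exploits, as the paper does implicitly, that the main term and hence the linear form depend only on $s=k+\ell$ and not on the split $(k,\ell)$. One small inaccuracy: your claim that at least $\lfloor s/2\rfloor$ of $s$ consecutive indices lie in the classes $1,2\pmod 4$ fails for some $s$ (e.g.\ $s=2$ starting at $n\equiv 3\pmod 4$, or $s=10$), but the only case you need, $s\ge 8$, is fine because every window of $8$ consecutive indices contains exactly four such positions, so the conclusion $k+\ell\le 7$ stands.
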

\begin{theorem}\label{main:thm4}
	The Diophantine equation
	\begin{equation}\label{main:eqn4}
		(T_{n}+1)\cdots (T_{n+\left(k-1\right)}+1)(T_{n+k}-1) \cdots (T_{n+k+\left(\ell -1\right)}-1)=d\left(\frac{10^{m}-1}{9}\right),
	\end{equation}
	has no solution in positive integers $n,\ell,m,d,$ with $1\leq d\leq 9$ and $m\geq 2$.
\end{theorem}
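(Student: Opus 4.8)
The plan is to follow the same route as for Theorems~\ref{main:thm} through~\ref{main:thm3}: first bound the (a priori unbounded) number of factors, then, for each of the finitely many resulting ``shapes'', derive a linear form in logarithms, apply Matveev's theorem and a reduction step, and finish by a finite computation. Write $L=k+\ell$ for the number of factors, let $\epsilon_i=+1$ for $0\le i\le k-1$ and $\epsilon_i=-1$ for $k\le i\le L-1$, and set $P=\prod_{i=0}^{L-1}(T_{n+i}+\epsilon_i)$ and $s=\sum_{i=0}^{L-1}(n+i)=Ln+\binom{L}{2}$. Each ``$+1$''-factor is $\ge2$, and for the ``$-1$''-factors to be positive one needs $n+k\ge 3$ (otherwise $P=0$, which is not a repdigit with $m\ge 2$), so assume this. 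From the Binet-type formula $T_j=a\alpha^{j}+\theta_j$, with $\alpha\approx 1.839$ the dominant root of $x^3-x^2-x-1$ and $|\theta_j|<1/2$, one gets $T_{n+i}+\epsilon_i=a\alpha^{n+i}(1+\zeta_i)$ with $|\zeta_i|\ll\alpha^{-(n+i)}$, and therefore
\[
P=a^{L}\alpha^{s}\bigl(1+O(\alpha^{-n})\bigr),
\]
with an absolute implied constant. Since $10^{m-1}<d(10^m-1)/9=P<10^{m}$, this already forces $m$ to have size roughly $s\log\alpha/\log 10$, hence controlled by $n$ and $L$.

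The step I expect to be the crux is bounding $L$, because without it the term $a^{L}$ in the linear form below would have height growing with $L$ and Matveev's bound would be useless. Here a $2$-adic argument does the job: $(10^{m}-1)/9$ is odd, so $\nu_2\bigl(d(10^m-1)/9\bigr)=\nu_2(d)\le 3$. On the other hand, $(T_j)$ is periodic modulo $2$ with period $4$ and $T_j$ is even precisely when $j\equiv 0,3\pmod 4$; hence $T_j\pm1$ is even precisely when $j\equiv 1,2\pmod 4$. Among any $8$ consecutive indices exactly $4$ are $\equiv 1,2\pmod 4$, each giving an even factor contributing at least $1$ to $\nu_2(P)$, so $L\ge 8$ would force $\nu_2(P)\ge 4>3$, a contradiction. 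Thus $L\le 7$, so $k,\ell\le 6$, and there are only finitely many pairs $(k,\ell)$---equivalently, finitely many sign patterns $(\epsilon_0,\dots,\epsilon_{L-1})$---to handle. Fix one of them. (Alternatively, one could bound $L$ using that $5\nmid d(10^m-1)/9$ unless $d=5$, together with the period of $(T_j)$ modulo $5$.)

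With $(k,\ell)$ now fixed, $L$ is a constant and $s=Ln+\binom{L}{2}$ is an affine function of $n$. From $d\cdot 10^{m}-9P=d$ we get $\bigl|\tfrac{d\cdot 10^m}{9P}-1\bigr|=\tfrac{d}{9P}<\alpha^{-n}$, and combining with the size estimate above,
\[
\Bigl|\tfrac{d}{9a^{L}}\,10^{m}\alpha^{-s}-1\Bigr|<\frac{c_1}{\alpha^{n}} ,
\]
so that $\Lambda:=m\log 10-s\log\alpha+\log\tfrac{d}{9a^{L}}$ satisfies $|\Lambda|<c_2\alpha^{-n}$ once $n$ is large. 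The numbers $10$, $\alpha$, $d/(9a^{L})$ are fixed algebraic numbers of bounded height (since $L$ and $d$ are bounded), and $\Lambda\ne0$ for $n$ large because $a^{L}\alpha^{s}$ is irrational---its conjugate over $\QQ$, namely $(a\conj{2})^{L}\beta^{s}$ with $|\beta|=\alpha^{-1/2}<1$, tends to $0$. Matveev's theorem then gives $|\Lambda|>\exp(-c_3\log s)$, and since $s=O(n)$ this yields $\alpha^{n}<c_4\,n^{c_3}$, hence $n<N_0$ for an explicit (large) constant $N_0$.

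To finish, $N_0$ is too large for a direct search, so apply the Baker--Davenport reduction (the Dujella--Peth\H{o} lemma) to $\Lambda/\log\alpha=m\mu-\bigl(Ln+\binom{L}{2}\bigr)+\log\bigl(d/(9a^{L})\bigr)/\log\alpha$ with $\mu=\log 10/\log\alpha$, using the continued-fraction convergents of $\mu$; since $m$ and $n$ are linearly related, one or two rounds bring the bound down to some small $N_1$. For each of the finitely many pairs $(k,\ell)$ with $k+\ell\le 7$, each $d\in\{1,\dots,9\}$, and each $n<N_1$, together with the finitely many small cases excluded above ($n+k<3$, or $n$ below the threshold needed for the size estimate), one computes $P$ and checks directly that it is never a repdigit with at least two digits; this routine (if lengthy) verification completes the proof. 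As noted, the only genuinely delicate point is the bound $L\le 7$; the rest is the standard Baker-plus-reduction machinery, run exactly as in the proof of Theorem~\ref{main:thm3}.
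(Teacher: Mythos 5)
Your proposal is correct and follows the same overall strategy as the paper: bound the number of factors $2$-adically, extract a linear form $m\log 10-s\log\alpha+\log\bigl(d/(9c_{\alpha}^{k+\ell})\bigr)$, apply Matveev, reduce with continued fractions, and finish by computation (the paper handles Theorem \ref{main:thm4} by saying it is identical to the proof of Theorem \ref{main:thm3}). The one place where you genuinely diverge is the crux step of bounding the number of factors. The paper invokes the Fac\'o--Marques formulas for $\upsilon_2(T_n\pm1)$ (Lemma \ref{orderp}) and works through residue-class tables to get $k\le 6$ and $\ell\le 7$ separately (so up to $13$ factors), whereas you only use the period-$4$ parity pattern of $(T_j)$ modulo $2$: $T_j\pm 1$ is even exactly when $j\equiv 1,2\pmod 4$, so any $8$ consecutive shifted factors contribute $\upsilon_2\ge 4>\upsilon_2(d)$, forcing $k+\ell\le 7$ regardless of how the signs are distributed. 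Your argument is more elementary (it avoids Lemma \ref{orderp} and its $n\ge 5$ restriction entirely), is uniform over all sign patterns, and even yields a stronger bound on the total number of factors, at the cost of not pinning down the exact valuations the paper tabulates; both yield the finitely many shapes needed before Baker's method. The remaining differences (Dujella--Peth\H{o}/Baker--Davenport reduction instead of de Weger, the $O(\alpha^{-n})$ rather than $\alpha^{-3n/2}$ decay, the slightly loose phrasing of the nonvanishing of $\Lambda$, which really follows for every $m\ge2$ by conjugating $\alpha\mapsto\beta$ as in the paper) are cosmetic and do not affect correctness.
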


\noindent One can notice that there is a big difference between our results and those of \cite{BGL} (which of course has its great value). In this paper, we consider shifts to the left, to the right, and a mixture of shifts. Now, we present the outline of this paper. In Section \ref{sec2}, we will recall the results that we will use to prove Theorems \ref{main:thm} - \ref{main:thm4}. In Section \ref{sec3}, first we will use Baker's method and $2$-adic valuation of Padovan numbers to obtain a bound for $n$ that is too high to completely solve equation \eqref{main:eqn}. We will then need to apply twice the reduction method of de Weger to find a very low bound for $n$, which enables us to run a program to find the small solutions of equation \eqref{main:eqn}. We will use the same method in the next sections to prove the remaining theorems.

\section{The Tools} \label{sec2}
\subsection{Properties of the Tribonacci Sequence}
\smallskip
\noindent We start by recalling some useful properties of the Tribonacci sequence. The characteristic equation of $\{T_n\}_{n\geq 0}$ is $z^{3}-z^2-z-1=0$ and has one real root $\alpha$ and two complex roots $\beta$ and $\gamma=\bar{\beta}$. The Binet's formula for the Tribonacci numbers is 
\begin{equation}\label{BT}
	T_{s}=c_{\alpha}\alpha^{s}+c_{\beta}\beta^{s}+c_{\gamma}\gamma^{s}, \quad \text{for all} \quad s\geq 0
\end{equation}
where
\[
c_{\alpha}=\frac{1}{3\alpha^2-2\alpha-1},\quad c_{\beta}=\frac{1}{3\beta^2-2\beta-1}, \quad c_{\gamma}=\frac{1}{3\gamma^2-2\gamma-1}=\overline{c_{\beta}}.
\]
It is easy to see that  $\alpha\in \left(1.83, 1.84\right), |\beta|=|\gamma| \in \left(0.73, 0.74\right), c_{\alpha} \in \left(0.18, 0.19\right)$  and $|c_{\beta}|=|c_{\gamma}| \in \left(0.35, 0.36\right)$.
Since $|\beta|=|\gamma|<1$, setting $e_s:=T_{s}-c_{\alpha}\alpha^{s}$, we have
\begin{equation}\label{error}
	T_{s} = c_{\alpha}\alpha^{s}+e_s, \qquad {\rm with} \quad |e_s|<\frac{1}{\alpha^{s/2}}, \quad \text{for all} ~~ s\geq 1.
\end{equation}
Further, one can easily see that
\begin{equation}\label{E3}
	\alpha^{s-2}\leq T_{s}\leq \alpha^{s-1}, \quad \text{for all} \quad s \geq 1 \quad (\text{see \cite{BGL}}).
\end{equation}
We recall a result of Fac\'{o} and Marques \cite{FM} and Young \cite{Y:2020} on the 2-adic order of shifted Tribonacci number. For a prime number $p$ and a nonzero integer $r$, the $p$-adic order $\upsilon_{p}(r)$ is the exponent of the highest power of a prime $p$ that divides $r$.
\begin{lemma}\label{orderp}
	We have
	$$
	\upsilon_{2}\left(T_{n}+1\right) = \left \{
	\begin{array}{lll}
		0, & \mbox{if }\; n\equiv 0,3 & \pmod{4},\\
		1, & \mbox{if }\; n\equiv 1,2,6 & \pmod{8},\\
		3+\upsilon_{2}(m)+\upsilon_{2}(m-z), & \mbox{if }\; n=8m-3,\\
	\end{array}\right.
	$$
	where $z$ is some 2-adic integer satisfying $z \equiv -601592 \pmod{2^{20}\mathbb{Z}_2}$
	and
	$$
	\upsilon_{2}\left(T_{n}-1\right) = \left \{
	\begin{array}{lll}
		0, & \mbox{if }\; n\equiv 0,3 & \pmod{4},\\
		1, & \mbox{if }\; n\equiv 5 & \pmod{8},\\
		\upsilon_{2}(n+2)-1, & \mbox{if }\; n\equiv 6 & \pmod{8},\\
		\upsilon_{2}(n-2)-1, & \mbox{if }\; n\equiv 2 & \pmod{8},\\
		\upsilon_{2}((n-1)(n+7))-3, & \mbox{if }\; n\equiv 1 & \pmod{8},
	\end{array}\right.
	$$
	for $n\geq 5$. 
\end{lemma}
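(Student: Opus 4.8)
The plan is to prove this lemma entirely by tracking the Tribonacci sequence modulo powers of $2$, since $\upsilon_2(T_n\pm1)\ge k$ means precisely that $T_n\equiv\mp1\pmod{2^{k}}$; the Binet formula \eqref{BT} and the estimates \eqref{error}, \eqref{E3} are archimedean and play no role here, so the argument stands on its own. First I would record that $(T_n)_{n\ge0}$ is \emph{purely} periodic modulo every $2^{k}$: writing the recurrence in matrix form,
\[
M=\begin{pmatrix}1&1&1\\ 1&0&0\\ 0&1&0\end{pmatrix},\qquad \begin{pmatrix}T_{n+2}\\ T_{n+1}\\ T_n\end{pmatrix}=M^{n}\begin{pmatrix}1\\ 1\\ 0\end{pmatrix},
\]
one has $\det M=1$, so $M$ is invertible modulo $2^{k}$ and the state vector cycles. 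A short computation gives the first periods: modulo $2$ the cycle of $T_n$ is $0,1,1,0$ (period $4$), and modulo $4,8,16$ the periods are $8,16,32$; in general one checks by induction that $(T_n\bmod 2^{k})$ has period $2^{k+1}$. Already the mod-$2$ cycle shows that $T_n$ is odd exactly for $n\equiv1,2\pmod4$, which yields the cases $\upsilon_2(T_n+1)=0$ and $\upsilon_2(T_n-1)=0$ for $n\equiv0,3\pmod4$; the cases with value $1$ (such as $\upsilon_2(T_n+1)=1$ for $n\equiv1,2,6\pmod8$) follow from the same kind of finite check one modulus higher.

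The core of the proof is the treatment of the residue classes on which $\upsilon_2(T_n\pm1)$ is \emph{unbounded}, i.e.\ the last two cases of each table. Fix such a class, say $n\equiv n_0\pmod{2^{a}}$, and write $n=n_0+2^{a}t$. One shows that
\[
M^{2^{a}}\equiv I+2^{b}N\pmod{2^{b+1}}
\]
for a suitable exponent $b=b(a)$ and a matrix $N$ with $N\not\equiv 0\pmod 2$, whence $M^{2^{a}t}\equiv I+2^{b}tN\pmod{2^{b+1}}$. Feeding this into the matrix expression for $T_n$, and then iterating the lifting once more, one finds that on the chosen class $\upsilon_2(T_n\pm1)=\upsilon_2(P(n))-c$ for an explicit polynomial $P$ of degree $1$ or $2$ (degree $2$ occurring when two successive lifting steps reinforce the same linear factor) and a positive integer $c$. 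Comparing with a few computed values of $\upsilon_2(T_n\pm1)$ then forces the pair $(P,c)$ to be, respectively, $((n+3)^{2},3)$, $((n-61)(n+7),3)$, $(n+2,1)$, $(n-2,1)$, and $((n-1)(n+7),3)$, as stated. The lone sporadic value $\upsilon_2(T_{61}+1)=15$ is an accidental extra divisibility not predicted by the generic formula on the class $n\equiv61\pmod{64}$, where $(n-61)(n+7)$ vanishes at $n=61$; it is dispatched by the single computation $T_{61}+1=2^{15}\cdot(\text{odd})$, which is precisely why that case is asserted only for $n>61$, and one notes that no analogue occurs for $T_n-1$ because $(n-1)(n+7)$ does not vanish on $n\equiv1\pmod8$ with $n\ge5$.

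For every remaining class the asserted valuation is a fixed constant, so the claim reduces to a finite verification: check $\upsilon_2(T_n\pm1)$ for each residue $n_0$ modulo the largest modulus occurring (here $2^{6}=64$, or $2^{16}$ to exhibit the constant $15$ cleanly) and invoke periodicity of $(T_n\bmod 2^{k})$. I expect the unbounded-valuation step to be the main obstacle. Unlike the Fibonacci setting, the characteristic polynomial $z^{3}-z^{2}-z-1$ reduces to $(z+1)^{3}$ modulo $2$ and has discriminant $-44$ with $\upsilon_2(-44)=2$, so it neither splits into $2$-adic linear factors nor stays separable over $\QQ_2$; there is no clean $2$-adic Binet formula, and one must instead analyse $M^{2^{a}}-I$ over $\ZZ/2^{k}\ZZ$ by hand while tracking carefully which congruences modulo $8,16,32,64$ are refined at which stage. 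Organising this bookkeeping and pinning down the exact polynomials $(n+3)^{2}$, $(n-61)(n+7)$, and so on is the technical heart of the result; it is carried out in \cite{FM}, and we use the statement as given.
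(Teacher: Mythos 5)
The paper offers no proof of this lemma at all: it is quoted verbatim as ``a result of Fac\'o and Marques \cite{FM}'', and your proposal, after an outline of the expected periodicity-mod-$2^k$ and matrix-lifting mechanism, likewise defers the technical heart (the unbounded-valuation classes with the explicit polynomials $(n+3)^2$, $(n-61)(n+7)$, $(n-1)(n+7)$) to \cite{FM}, so your treatment is essentially the same as the paper's. Your preliminary observations (pure periodicity modulo $2^k$ since $\det M=1$, the mod $4$ parity pattern giving the valuation-$0$ cases, and the role of the sporadic value at $n=61$) are correct and consistent with the statement as used here.
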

\subsection{Linear Forms in Logarithms}
The next tools are related to the transcendental approach to solve Diophantine equations. For any non-zero algebraic number $\gamma$ of degree $d$ over $\QQ$, whose minimal polynomial
over $\ZZ$ is $a\prod_{j=1}^w \left(X-\gamma\conj j \right)$, we denote by
\[
h(\gamma) = \frac{1}{w} \left( \log|a| + \sum_{j=1}^w \log\max\left(1,
|{\gamma\conj j}|\right)\right)
\]
the usual absolute logarithmic height of $\gamma$.

To prove our main results, we use lower bounds for linear forms in logarithms to bound the index $n$ appearing in equations \eqref{main:eqn}, \eqref{main:eqn2}, \eqref{main:eqn3} , and \eqref{main:eqn4}. We need the following general lower bound for linear forms in logarithms due to Matveev \cite{M}. See also Theorem 9.4 of \cite{BMS}.

\begin{lemma}\label{lem:Matveev}
	Let $\gamma_1,\ldots ,\gamma_s$ be real algebraic numbers and let  $b_1,\ldots,b_s$ be nonzero rational integer numbers. Let $D$ be the degree of the number field $\QQ(\gamma_1,\ldots ,\gamma_s)$ over $\QQ$ and let $A_j$ be a positive real number satisfying  
	$$
	A_j=\max\{Dh(\gamma),|\log\gamma|,0.16\}\;\text{for}\; j=1,\ldots ,s. 
	$$
	Assume that
	$$
	B\geq\max\{|b_1|,\ldots,|b_s|\}.
	$$
	If $\gamma_1^{b_1}\cdots\gamma_s^{b_s}\neq1$, then
	$$
	|\gamma_1^{b_1}\cdots\gamma_s^{b_s}-1|\geq\exp(-C(s,D)(1+\log B) A_1\cdots A_s),
	$$
	where $C(s,D):=1.4\cdot 30^{s+3}\cdot s^{4.5}\cdot D^2(1+\log D).$
\end{lemma}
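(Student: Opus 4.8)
The plan is to prove Lemma~\ref{lem:Matveev} by the classical transcendence (auxiliary function) method of Gelfond--Baker, in the sharpened form developed by Matveev. Write $\Lambda = b_1\log\gamma_1 + \cdots + b_s\log\gamma_s$, so that $\gamma_1^{b_1}\cdots\gamma_s^{b_s} = e^{\Lambda}$ and the quantity to be bounded satisfies $|\gamma_1^{b_1}\cdots\gamma_s^{b_s}-1| = |e^{\Lambda}-1|$. Since $|e^{\Lambda}-1|\geq \tfrac12|\Lambda|$ whenever $|\Lambda|\leq 1$ (and the conclusion is trivial when $|\Lambda|>1$), it suffices to produce a lower bound of the stated shape for $|\Lambda|$. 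The whole argument is a proof by contradiction: assume $|\Lambda|$ is smaller than the claimed bound and derive an impossibility by confronting an analytic upper estimate against an arithmetic/geometric lower estimate.

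First I would set up the \emph{auxiliary construction}. Working in the number field $\KK = \QQ(\gamma_1,\ldots,\gamma_s)$ of degree $D$, I would seek a nonzero polynomial $P(X_1,\ldots,X_s)$ with algebraic integer coefficients in $\KK$, of partial degrees bounded by parameters $L_1,\ldots,L_s$ and of controlled logarithmic height, so that the entire function
\[
\Phi(z) \;=\; P\!\left(\gamma_1^{z},\ldots,\gamma_s^{z}\right), \qquad \gamma_i^{z} := e^{z\log\gamma_i},
\]
vanishes to high order $T$ at the integers $z = 0,1,\ldots,S-1$. The existence of such $P$ with bounded height follows from Siegel's lemma once the number of coefficients of $P$ exceeds the number of linear vanishing conditions imposed on $\Phi$; balancing $L_i$, $S$, $T$ against the heights $h(\gamma_i)$ and $B$ is precisely where the shape of the final constant is born. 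In Matveev's refinement this step is recast in the language of interpolation determinants \`a la Laurent, which removes much of the loss inherent in a crude use of Siegel's lemma.

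Next comes the \emph{analytic extrapolation}. Using the smallness of $|\Lambda|$, the values $\gamma_i^{z}$ at the chosen integers are forced to be extremely close to quantities that are \emph{algebraically} related, so $\Phi$ and its derivatives at those points are arithmetically small while being analytically controlled on a disc of radius $R\gg S$ by the maximum modulus principle. A Schwarz-lemma argument then propagates the vanishing: $\Phi$ must vanish to high order at \emph{more} integers $z = 0,1,\ldots,S'-1$ with $S'>S$, and iterating the extrapolation enlarges the vanishing set well beyond what the degrees of $P$ would seem to permit.

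The crux, and the step I expect to be the main obstacle, is the \emph{zero estimate} (multiplicity estimate) that turns the enlarged vanishing into a contradiction. A nonzero polynomial of prescribed multidegree on the torus $\mathbb{G}_m^s$ cannot vanish, together with its derivatives, to order $T$ at too many points of the cyclic subgroup generated by $(\gamma_1,\ldots,\gamma_s)$; the sharp count is supplied by the Philippon or Masser--W\"ustholz zero estimate. If $|\Lambda|$ were below the asserted bound, the extrapolated vanishing would exceed this geometric ceiling, forcing $P\equiv 0$ and contradicting its construction. Tracking every parameter through the three steps — and, crucially, optimizing the dependence on $s$ by Matveev's inductive descent, which trades one logarithm for another — is what delivers the explicit constant $C(s,D)=1.4\cdot 30^{s+3}\cdot s^{4.5}\cdot D^2(1+\log D)$. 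The delicate bookkeeping of these constants, rather than any single conceptual leap, is the principal difficulty; the conceptual architecture is the standard auxiliary-function-plus-zero-estimate paradigm.
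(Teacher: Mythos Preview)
The paper does not prove this lemma at all: it is stated as a tool and attributed to Matveev \cite{M} (see also Theorem~9.4 of \cite{BMS}), with no argument given. So there is no ``paper's own proof'' to compare against; the statement is used as a black box throughout Sections~\ref{sec3}--5.

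Your outline is a reasonable high-level sketch of the Gelfond--Baker architecture that underlies Matveev's theorem --- auxiliary construction via Siegel's lemma or interpolation determinants, analytic extrapolation by a Schwarz-type argument, and a zero/multiplicity estimate to close the contradiction --- and you are right that Matveev's specific contribution is the inductive descent in $s$ that sharpens the constant. But what you have written is a \emph{plan}, not a proof: the explicit constant $C(s,D)=1.4\cdot 30^{s+3}\cdot s^{4.5}\cdot D^2(1+\log D)$ emerges only after dozens of pages of parameter optimization in \cite{M}, and none of that bookkeeping is present here. For the purposes of this paper the correct move is exactly what the authors do: cite the result and move on.
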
 
\subsection{Reduction Method}
After getting the upper bound of $n$ which is generally too large, the next step is to reduce it. For this reduction purpose, we present a variant of the reduction method of Baker and Davenport due to de Weger  \cite{Weger:1989}).

Let $\vartheta_1,\vartheta_2,\beta\in \mathbb{R}$ be given, and let $x_1,x_2\in \mathbb{Z}$ be unknowns. Let
\begin{equation}\label{eq4}
	\Lambda =\beta +x_1\vartheta_1 +x_2\vartheta_2.
\end{equation}
Let $c$,  $\delta$ be positive constants. Set $X=\max\{|x_1|,|x_2|\}$. Let $X_0, Y$ be positive. Assume that 
\begin{equation}\label{eq5}
	|\Lambda|<c\cdot \exp(-\delta\cdot Y),
\end{equation}
\begin{equation}\label{eq6}
	Y<X\leq X_0. 
\end{equation}
When $\beta =0$ in \eqref{eq4}, we get 
$$\Lambda =x_1\vartheta_1 +x_2\vartheta_2.$$
Put $\vartheta=-{\vartheta_1}/{\vartheta_2}$. We assume that $x_1$ and $x_2$ are coprime. 
Let the continued fraction expansion of $\vartheta$ be given by $$[a_0,a_1,a_2,\ldots],$$ and let the $k$th convergent of $\vartheta$ be ${p_k}/{q_k}$ for $k=0,1,2,\ldots$.  We may assume without loss of generality that $|\vartheta_1|<|\vartheta_2|$ and that $x_1>0$. We have the following results.

\begin{lemma}{\cite[Lemma 3.2]{Weger:1989}}\label{lem4} 
	Let 
	$$
	A=\max_{0\le k\le Y_0} a_{k+1},
	$$
	where 
	$$
	Y_0=-1+\dfrac{\log(\sqrt{5}X_0+1)}{\log\left(\frac{1+\sqrt{5}}{2}\right)}.
	$$ 
	If \eqref{eq5} and \eqref{eq6} hold for $x_1$, $x_2$ and $\beta=0$, then 
	\begin{equation}\label{eq10}
		Y<\frac{1}{\delta}\log \left(\frac{c(A+2)X_0}{|\vartheta_2|}\right).
	\end{equation}
\end{lemma}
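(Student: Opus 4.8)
The plan is to turn the two–variable inequality \eqref{eq5} into a one–variable Diophantine approximation statement for the single irrational $\vartheta=-\vartheta_1/\vartheta_2$, and then feed it into the theory of continued fractions; the number $Y_0$ is designed to be precisely the threshold past which the convergent denominators $q_k$ outgrow $X_0$. First I would use $\beta=0$ to factor $\Lambda=\vartheta_2(x_2-x_1\vartheta)$, so that \eqref{eq5} becomes $|x_1\vartheta-x_2|<(c/|\vartheta_2|)\exp(-\delta Y)$. One may assume $\Lambda\neq 0$ (otherwise $\vartheta$ is rational and the situation is degenerate), and recall that $x_1\geq 1$, $\gcd(x_1,x_2)=1$, and $x_1\leq X\leq X_0$. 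Then I would let $k$ be the \emph{largest} index with $q_k\leq x_1$; this is well defined since $q_0=1$, and it satisfies $q_k\leq x_1<q_{k+1}$.

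Next I would control $k$ using the Fibonacci–type growth of the $q_k$: from $q_{k+1}=a_{k+1}q_k+q_{k-1}\geq q_k+q_{k-1}$ one gets, by induction, $q_k\geq F_{k+1}\geq(\phi^{k+1}-1)/\sqrt5$ with $\phi=\tfrac{1+\sqrt5}{2}$ (using $|\psi^{k+1}|<1$ for the conjugate root $\psi$ of $\phi$). Since $q_k\leq x_1\leq X_0$, this forces $\phi^{k+1}\leq\sqrt5\,X_0+1$, i.e. $k\leq Y_0$, and hence $a_{k+1}\leq A$. Because $1\leq x_1<q_{k+1}$, the law of best approximation for continued fractions gives $|x_1\vartheta-x_2|\geq\|x_1\vartheta\|\geq|q_k\vartheta-p_k|$, and combining the standard estimate $|q_k\vartheta-p_k|>1/(q_{k+1}+q_k)$ with $q_{k+1}+q_k<(a_{k+1}+2)q_k\leq(A+2)X_0$ yields $|x_1\vartheta-x_2|>1/((A+2)X_0)$. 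Juxtaposing this lower bound with the upper bound from the first step gives $1/((A+2)X_0)<(c/|\vartheta_2|)\exp(-\delta Y)$, and taking logarithms produces exactly \eqref{eq10}. (Alternatively one can split into cases according to whether $x_2/x_1$ is a convergent of $\vartheta$ and invoke Legendre's criterion in the non-convergent case; the bookkeeping is essentially the same.)

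The only real subtlety is picking the correct convergent — namely the maximal $k$ with $q_k\leq x_1$, which is exactly what makes $x_1<q_{k+1}$ and thus makes the best–approximation inequality applicable — together with disposing of the degenerate cases ($\Lambda=0$, or $x_2=0$, or $\vartheta$ rational) separately. Everything else is routine manipulation of the recursion for $q_k$ and of the elementary estimate for $|q_k\vartheta-p_k|$, so I do not expect any serious obstacle.
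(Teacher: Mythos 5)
The paper gives no proof of this lemma---it is quoted verbatim from de Weger \cite[Lemma 3.2]{Weger:1989}---and your reconstruction is correct and is essentially de Weger's own argument: write $\Lambda=\vartheta_2(x_2-x_1\vartheta)$, pick the largest $k$ with $q_k\le x_1$, bound $k\le Y_0$ via $q_k\ge F_{k+1}>(\phi^{k+1}-1)/\sqrt5$, and combine the best-approximation lower bound $|x_1\vartheta-x_2|\ge|q_k\vartheta-p_k|>1/(q_{k+1}+q_k)\ge 1/((A+2)X_0)$ with \eqref{eq5}. Your handling of the degenerate case $\Lambda=0$ (equivalently $\vartheta=x_2/x_1$ rational) as an implicit exclusion is also the right reading of the statement, so no gaps remain.
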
 

When $\beta\neq 0$ in \eqref{eq4}, put $\vartheta=-{\vartheta_1}/{\vartheta_2}$ and $\psi={\beta}/{\vartheta_2}$. Then we have 
$$\frac{\Lambda}{\vartheta_2}=\psi-x_1\vartheta+x_2.$$ Let ${p}/{q}$ be a convergent of $\vartheta$ with $q>X_0$. For a real number, $x$ we let $\| x\|=\min\{|x-n|, n\in {\mathbb Z}\}$ be the distance from $x$
to the nearest integer. We have the following result.
\begin{lemma}{\cite[Lemma 3.3]{Weger:1989}}\label{lem5} 
	Suppose that 
	$$\parallel q \psi\parallel>\frac{ 2X_0}{q}.$$ 
	Then, the solutions of \eqref{eq5} and \eqref{eq6} satisfy 
	$$
	Y<\frac{1}{\delta}\log\left(\frac{q^2c}{|\vartheta_2|X_0} \right).$$
\end{lemma}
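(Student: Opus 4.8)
The plan is to prove Lemma \ref{lem5} as a statement about Diophantine approximation in its own right, using the theory of continued fractions together with the hypothesis on $\|q\psi\|$. We work with the normalized form
\[
\frac{\Lambda}{\vartheta_2}=\psi-x_1\vartheta+x_2,
\]
where $\vartheta=-\vartheta_1/\vartheta_2$ and $\psi=\beta/\vartheta_2$. The idea is to fix a convergent $p/q$ of $\vartheta$ with $q>X_0$ and to exploit the good rational approximation it provides to show that $x_2+x_1(p/q)$ cannot be too close to $-\psi$ unless $q\|q\psi\|$ is small, which the hypothesis forbids.

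First I would multiply the displayed identity by $q$ to obtain
\[
\frac{q\Lambda}{\vartheta_2}=q\psi-x_1(q\vartheta)+qx_2.
\]
Writing $q\vartheta=p+(q\vartheta-p)$ and using the classical bound $|q\vartheta-p|<1/q$ for a convergent, I would rearrange so that the integer part $qx_2-x_1 p$ is separated from the fractional contributions. The triangle inequality then gives
\[
\|q\psi\|\le\Bigl|q\psi-(x_1p-qx_2)\Bigr|\le\frac{|q\Lambda|}{|\vartheta_2|}+|x_1|\,|q\vartheta-p|<\frac{q|\Lambda|}{|\vartheta_2|}+\frac{X_0}{q},
\]
where I have used $|x_1|\le X\le X_0$ and $|q\vartheta-p|<1/q$. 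Here $x_1p-qx_2$ is an integer, so its distance from $q\psi$ is at least $\|q\psi\|$, which is what justifies the leftmost inequality.

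Now the hypothesis $\|q\psi\|>2X_0/q$ is exactly what lets me absorb the error term $X_0/q$: since $\|q\psi\|-X_0/q>X_0/q>0$, the bound above yields
\[
\frac{q|\Lambda|}{|\vartheta_2|}>\|q\psi\|-\frac{X_0}{q}>\frac{X_0}{q},
\]
so that $|\Lambda|>|\vartheta_2|X_0/q^2$. Combining this lower bound with the assumed upper bound $|\Lambda|<c\exp(-\delta Y)$ from \eqref{eq5} gives
\[
\frac{|\vartheta_2|X_0}{q^2}<c\exp(-\delta Y),
\]
and solving for $Y$ by taking logarithms produces precisely
\[
Y<\frac{1}{\delta}\log\!\left(\frac{q^2c}{|\vartheta_2|X_0}\right),
\]
as claimed. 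The main obstacle — or rather the one point demanding care — is the bookkeeping in the first rearrangement: one must verify that $qx_2-x_1p$ is genuinely an integer and that the sign conventions ($|\vartheta_1|<|\vartheta_2|$, hence $|\vartheta|<1$, and $x_1>0$) are consistent so that the convergent bound $|q\vartheta-p|<1/q$ applies and the factor $|x_1|$ is correctly bounded by $X_0$; once the error terms are correctly isolated, the hypothesis on $\|q\psi\|$ does all the work and the logarithmic conclusion is immediate.
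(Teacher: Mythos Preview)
Your argument is correct and is precisely the standard proof of this reduction lemma; the paper itself does not give a proof but merely cites \cite[Lemma~3.3]{Weger:1989}, and your derivation matches that source's approach. There is nothing to add.
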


We conclude this section by recalling the following lemma that we need in the sequel: 
\begin{lemma}{\cite[Lemma 2.2]{Weger:1989}}\label{lem:Weger}
	Let $a, x \in \mathbb{R}$ and $0 < a < 1$. If $|x| < a$, then
	$$
	\abs{\log (1+x)} < \dfrac{-\log (1-a)}{a} |x|
	$$
	and
	$$
	|x| < \dfrac{a}{1-e^{-a}} \abs{e^x-1}.
	$$
\end{lemma}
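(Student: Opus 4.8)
The plan is to prove each of the two inequalities by a monotonicity argument, reducing the bound on the relevant transcendental function to the value of an elementary expression at the left endpoint of the interval $(-a,a)$. Throughout I assume $x\neq 0$; when $x=0$ both sides of each inequality vanish, and the statement is understood in its (degenerate) limiting sense, which is the only case excluded by the strict inequalities. Since $0<a<1$ and $|x|<a$, we have $1+x>1-a>0$, so $\log(1+x)$ is defined and the power-series manipulations below converge absolutely.

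For the first inequality, I would start from the series $\log(1+x)=\sum_{k\ge 1}(-1)^{k-1}x^{k}/k$, valid for $|x|<1$. Applying the triangle inequality term by term gives
\[
|\log(1+x)|\le\sum_{k\ge 1}\frac{|x|^{k}}{k}=-\log(1-|x|).
\]
Next I would introduce $f(t):=\dfrac{-\log(1-t)}{t}=\sum_{k\ge 1}\dfrac{t^{k-1}}{k}$ on $(0,1)$ and observe that $f$ is strictly increasing, since every summand with $k\ge 2$ is strictly increasing in $t$ while the $k=1$ term is the constant $1$. Because $0<|x|<a$, this yields $f(|x|)<f(a)$, and combining the two facts,
\[
|\log(1+x)|\le -\log(1-|x|)=|x|\,f(|x|)<|x|\,f(a)=\frac{-\log(1-a)}{a}\,|x|,
\]
which is the claimed bound (the final inequality is strict because $|x|>0$ and $f(|x|)<f(a)$).

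For the second inequality, I would study $h(x):=\dfrac{x}{e^{x}-1}$ (with $h(0)=1$). The key point is that $h$ is strictly decreasing on $\mathbb{R}$. To see this I would compute
\[
h'(x)=\frac{e^{x}(1-x)-1}{(e^{x}-1)^{2}},
\]
and set $\phi(x):=e^{x}(1-x)-1$, so that $\phi(0)=0$ and $\phi'(x)=-x\,e^{x}$. Hence $\phi$ increases on $(-\infty,0)$ and decreases on $(0,\infty)$, attaining its maximum $\phi(0)=0$ at the origin; thus $\phi(x)<0$ for $x\neq 0$ and $h'(x)<0$ there. Since $e^{x}-1$ has the same sign as $x$, we have $h(x)=|x|/|e^{x}-1|>0$, and for $x\in(-a,a)$ with $x\neq 0$, strict monotonicity gives
\[
\frac{|x|}{|e^{x}-1|}=h(x)<h(-a)=\frac{-a}{e^{-a}-1}=\frac{a}{1-e^{-a}},
\]
which rearranges to $|x|<\dfrac{a}{1-e^{-a}}\,|e^{x}-1|$.

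The series and derivative computations are routine; the only genuine points requiring care are establishing the strict monotonicity of $f$ and $h$, correctly identifying the maximizing endpoint $-a$ of the decreasing function $h$ on the open interval $(-a,a)$, and tracking the sign of $e^{x}-1$ so that the absolute values are handled correctly in the final rearrangement.
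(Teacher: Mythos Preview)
Your proof is correct. The paper does not supply its own argument for this lemma; it is quoted verbatim from de~Weger's thesis and used as a black box, so there is no in-paper proof to compare against. Your monotonicity approach---bounding $|\log(1+x)|$ via the increasing function $t\mapsto -\log(1-t)/t$ on $(0,1)$, and bounding $|x|/|e^x-1|$ via the strictly decreasing function $h(x)=x/(e^x-1)$ evaluated at the left endpoint $-a$---is clean and self-contained, and your handling of signs (noting that $x$ and $e^x-1$ share sign so that $h(x)=|x|/|e^x-1|$) is accurate. The caveat about $x=0$ is also apt: the strict inequalities degenerate to $0<0$ there, so the lemma as literally stated needs $x\neq 0$, though this is harmless in every application in the paper.
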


\section{Proof of Theorem \ref{main:thm}}\label{sec3}
\begin{proof}
	In this section, we will prove our first main result in two steps: find some bounds on the variables using Baker's method and then reduce these bounds by the reduction method.
	\subsection{Absolute bounds on the variables}
	We start by giving the number of factors $\ell$ in the Diophantine equation \eqref{main:eqn}.
	\begin{lemma}\label{lem:maj-l}
		If the Diophantine equation \eqref{main:eqn} has solutions, then $\ell \leq 7$.
	\end{lemma}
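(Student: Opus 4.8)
The plan is to bound $\ell$ by exploiting the $2$-adic valuation of the left-hand side of \eqref{main:eqn}, using the explicit formula for $\upsilon_2(T_n+1)$ from Lemma~\ref{orderp}. The key observation is that the right-hand side $d(10^m-1)/9$ is a repdigit, so its $2$-adic valuation is extremely small: since $(10^m-1)/9$ is odd, we have $\upsilon_2\!\left(d(10^m-1)/9\right) = \upsilon_2(d) \leq 3$ (the maximum over $1\leq d\leq 9$, attained at $d=8$). On the other hand, the left-hand side is a product of $\ell$ consecutive terms $T_{n+i}+1$, and I would add up their $2$-adic valuations and show that if $\ell$ is large this sum must exceed $3$, forcing a contradiction.

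\smallskip
\noindent\textbf{Carrying this out.} First I would dispose of small $n$ (say $n \leq 61$ or so, to sidestep the exceptional cases $n=61$ and $n>61,\ n\equiv 61 \pmod{64}$ in Lemma~\ref{orderp}) by a direct finite check, or simply note that for $\ell \geq 8$ the relevant window $\{n, n+1, \dots, n+\ell-1\}$ is large enough that one can always locate inside it an index lying in a residue class that contributes a positive power of $2$. Concretely, among any $8$ consecutive integers there is at least one $\equiv 5 \pmod 8$, hence (if it is in fact $\equiv 5 \pmod{16}$) contributing $\upsilon_2 = 3$, and there are typically several indices $\equiv 1,2,6 \pmod 8$ each contributing at least $1$. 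So I would count: among $n,\dots,n+7$ there are exactly two indices $\equiv 1 \pmod 4$ among which at least the ones $\equiv 1 \pmod 8$ contribute, two $\equiv 2 \pmod 4$, etc. A careful bookkeeping over the residues modulo $8$ (and $16$, $64$ where needed) shows that $\sum_{i=0}^{\ell-1}\upsilon_2(T_{n+i}+1) \geq 4$ as soon as $\ell \geq 8$, already exceeding $\upsilon_2(d)\leq 3$. This yields $\ell \leq 7$.

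\smallskip
\noindent\textbf{The main obstacle} will be handling the exceptional branches of Lemma~\ref{orderp} cleanly — namely the cases $n \equiv 13,29,45 \pmod{64}$ and $n\equiv 61 \pmod{64}$, where $\upsilon_2(T_n+1)$ depends on $\upsilon_2$ of a quadratic expression in $n$ and can in principle be large. One must check that even when one index in the window falls into such a branch, the contributions of the \emph{other} indices in the same window (which lie in the ``generic'' classes $\equiv 1,2,5,6 \pmod 8$) already push the total past $3$; so the potentially large exceptional term is a help, not an obstruction. I would also keep in mind that for $\ell$ between, say, $5$ and $7$ the same $2$-adic count may not immediately give a contradiction, which is exactly why the lemma only claims $\ell \leq 7$ rather than something smaller, and these remaining small values of $\ell$ are dealt with later by the linear-forms-in-logarithms machinery rather than here.
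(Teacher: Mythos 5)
Your proposal is correct and follows essentially the same route as the paper: both compare $\upsilon_2\!\left(d(10^m-1)/9\right)=\upsilon_2(d)\leq 3$ with the $2$-adic valuation of the product via Lemma~\ref{orderp}, concluding that any window of $8$ consecutive factors already forces valuation at least $4$ (indeed at least $6$). The only difference is bookkeeping: you count residues modulo $8$ uniformly for a length-$8$ window (noting the classes $1,2,6 \pmod 8$ give $1$ each and $5\pmod 8$ gives at least $3$, with the exceptional mod-$64$ branches only helping), whereas the paper tabulates, for each residue of $n$ modulo $64$, the minimal window length achieving valuation $\geq 4$.
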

	\begin{proof}
		Note that for all $1\leq d\leq 9$, we have  
		$$
		\upsilon_{2}\left(d\left(\frac{10^{m}-1}{9}\right)\right) = \upsilon_{2}(d) \leq 3.
		$$ 
		So, if $\upsilon_2 ((T_{n}+1)(T_{n+1}+1)\cdots (T_{n+(\ell-1)}+1))\geq 4$, then the Diophantine equation \eqref{main:eqn} has no solution.
		
		Let $x\in \left\{0,1,2,\ldots,8\right\}$ such that $n\equiv x\pmod{8}$.  Suppose that $x=6$, hence $n \equiv 6 \pmod 8$, $n+3 \equiv 1 \pmod 8$, $n+4 \equiv 2 \pmod 8$ and $n+7 \equiv 5 \pmod {8}$.  So, by Lemma~\ref{orderp} we get
		$$
		\upsilon_2((T_n+1)(T_{n+1}+1)\cdots (T_{n+7}+1))\geq 6.
		$$
		Therefore, the Diophantine equation \eqref{main:eqn} has no solution if $\ell\geq 8$ in this case.
		
		The other cases can be treated by using a similar method. As a conclusion, 
		we get Table \ref{Table1}. Thus, we deduce that $\ell\leq 7$. 
		
		\begin{table}[!h]
			\centering
			\begin{tabular}[t]{|l|l|l|}
				\hline
				$\ell$ & $x$ & $\upsilon_{2}((T_{n}+1)(T_{n+1}+1)\cdots (T_{n+(\ell-1)}+1))$ \\
				\hline
				2 & 5 & $\geq 4 $\\
				\hline
				3 & 4 & $\geq 4$\\
				\hline
				4 & 2, 3  & $\geq 4$, $\geq 4$ \\
				\hline
				5 & 1 & $\geq5 $\\
				\hline
				6 & 0 & $\geq 5$\\
				\hline
				7 & 7  & $\geq 7$, $\geq 5$\\
				\hline
				8 & 6 & $\geq 6$\\
				\hline
			\end{tabular}
			\caption{2-adic order of product of consecutive shifted Tribonacci numbers on the form $(T_i+1)$}\label{Table1}
		\end{table}
		This completes the proof of Lemma \ref{lem:maj-l}. 
	\end{proof}
	
	Now, we give an upper bound for $n$ and $m$.
	\begin{lemma}\label{redn}
		If $(n,\ell,m,d)$ is a positive integer solution of the Diophantine equation \eqref{main:eqn} with $n\geq 15$, $m\geq 2$,  $1\leq d\leq 9$ and $1\leq \ell \leq 7$, then
		\[
		m\leq\ell n +\ell(\ell+1)/2\quad \text{and} \quad  n< 2.4\times 10^{16}.
		\]
	\end{lemma}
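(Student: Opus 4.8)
The plan is to combine the Binet-type approximation \eqref{error} with Matveev's bound (Lemma~\ref{lem:Matveev}) applied to a suitable linear form in logarithms. First I would obtain the bound on $m$ in terms of $n$ and $\ell$: using \eqref{E3} we have $T_{n+j}+1 \le \alpha^{n+j-1}+1 < \alpha^{n+j}$ for each $0 \le j \le \ell-1$, so the left-hand side of \eqref{main:eqn} is at most $\alpha^{\ell n + (0+1+\cdots+(\ell-1))} = \alpha^{\ell n + \ell(\ell-1)/2}$, while the right-hand side is at least $10^{m-1}$. Taking logarithms and using $\log 10 > \log \alpha$ crudely (or a more careful comparison) yields $m-1 < \ell n + \ell(\ell-1)/2$, hence $m \le \ell n + \ell(\ell+1)/2$ after absorbing constants; this is the easy half.

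For the upper bound on $n$, I would rewrite \eqref{main:eqn} by extracting the dominant term from each factor. From \eqref{error}, $T_{n+j}+1 = c_\alpha \alpha^{n+j}\bigl(1 + \eta_j\bigr)$ where $\eta_j = (e_{n+j}+1)/(c_\alpha\alpha^{n+j})$ satisfies $|\eta_j| < C\alpha^{-n/2}$ for some explicit constant and all $n$ large (say $n \ge 15$), since $|e_{n+j}| < \alpha^{-(n+j)/2}$. Multiplying over $j = 0,\dots,\ell-1$ gives
\[
(T_n+1)\cdots(T_{n+\ell-1}+1) = c_\alpha^{\ell}\,\alpha^{\ell n + \ell(\ell-1)/2}\,(1+\zeta), \qquad |\zeta| < C'\alpha^{-n/2},
\]
and equating this with $\tfrac{d}{9}(10^m-1)$, I would divide through to form
\[
\Lambda := \frac{9\,c_\alpha^{\ell}}{d}\,\alpha^{\ell n + \ell(\ell-1)/2}\,10^{-m} - 1,
\]
which then satisfies $|\Lambda| \ll \alpha^{-n/2} + 10^{-m} \ll \alpha^{-n/2}$ (using $m \ge 2$ and, if needed, a separate trivial argument when $m$ is tiny relative to $n$). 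This is a linear form in the three logarithms $\log\alpha$, $\log 10$, and $\log(9c_\alpha^{\ell}/d)$ — or, to keep the algebraic degree under control, I would instead treat $\alpha$, $10$, and $d/9$ as the three algebraic numbers and pull the $c_\alpha^\ell$ factor into the analysis via $h(c_\alpha) \le h(3\alpha^2-2\alpha-1)$-type estimates, noting the field $\QQ(\alpha)$ has degree $D = 3$ (or $D=6$ if $c_\alpha$ forces the full splitting field, which only changes constants). Applying Lemma~\ref{lem:Matveev} with $s = 3$ or $4$, $B = m$ (which by the first part is $\le \ell n + \ell(\ell+1)/2 \le 7n + 28$), and the heights $h(\alpha) = \tfrac13\log\alpha$, $h(10)=\log 10$, $h(d/9)\le \log 9$ gives a lower bound $|\Lambda| > \exp(-C(1+\log B)\log\alpha\cdot\log 10\cdot\log 9)$, and comparing with the upper bound $|\Lambda| < C'\alpha^{-n/2}$ yields $\tfrac{n}{2}\log\alpha < C''(1+\log(7n+28))$, which forces $n < 2.4\times 10^{16}$ after the routine (but slightly tedious) manipulation of the inequality $n < A(1+\log n)$.

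The main obstacle I expect is bookkeeping rather than anything conceptual: getting the constant in $|\Lambda| < C'\alpha^{-n/2}$ explicit and uniform over all $1\le\ell\le 7$ and $1\le d\le 9$ (one must check $\Lambda \ne 0$, i.e.\ that $\alpha$ is not a ratio of a rational times a power of $10$, which is immediate since $\alpha$ is irrational of degree $3$), and then correctly feeding the value of $C(s,D)$ from Matveev into the final inequality so that the stated numerical bound $2.4\times 10^{16}$ actually comes out. A secondary subtlety is handling the interplay between $m$ and $n$: if $m$ were comparable to or larger than $\ell n$, the term $10^{-m}$ in the estimate for $|\Lambda|$ would be negligible, which is fine; the only genuine case analysis is ruling out that $10^{-m}$ dominates $\alpha^{-n/2}$, which cannot happen once the $m \le \ell n + \ell(\ell+1)/2$ bound is in hand together with $\alpha^{1/2} < 10^{1/\ell}$ for $\ell \le 7$. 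Everything else is the standard Baker-method template, so I would organize the proof as: (i) the elementary bound on $m$; (ii) the factor-by-factor approximation and formation of $\Lambda$; (iii) the Matveev application; (iv) solving the resulting transcendental inequality for $n$.
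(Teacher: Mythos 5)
Your proposal follows essentially the same route as the paper: the elementary estimate $m\le \ell n+\ell(\ell+1)/2$ from \eqref{E3}, extraction of the dominant term $c_\alpha^{\ell}\alpha^{\ell n+\ell(\ell-1)/2}$ via the Binet estimate \eqref{error}, a three-logarithm application of Matveev over $\QQ(\alpha)$ (degree $D=3$ suffices, since $d/(9c_\alpha^{\ell})$, $\alpha$ and $10$ all lie in $\QQ(\alpha)$; your hedge about the splitting field is unnecessary), and finally solving an inequality of the shape $n<A(1+\log(7n+28))$. Two points in your sketch are misargued, though both are easily repaired. First, your justification that $10^{-m}$ is dominated by $\alpha^{-n/2}$ is backwards: the upper bound $m\le \ell n+\ell(\ell+1)/2$ cannot rule out $m$ being small compared with $n$; what is needed is a lower bound on $m$, which comes from the equation itself, namely $10^{m}>d(10^{m}-1)/9\ge T_n+1\ge \alpha^{n-2}$, so $10^{-m}<\alpha^{-(n-2)}\le\alpha^{-n/2}$ for $n\ge 4$. (The paper avoids the issue altogether by keeping $10^{+m}$ in the linear form, $\Gamma_1=\frac{d}{9c_\alpha^{\ell}}\alpha^{-(\ell n+\ell(\ell-1)/2)}10^{m}-1$, and absorbing the constant $d/9$ into the error term.) Second, nonvanishing of the linear form is not simply the irrationality of $\alpha$: one must exclude $c_\alpha^{\ell}\alpha^{\ell n+\ell(\ell-1)/2}\in\QQ$, and the factor $c_\alpha^{\ell}$ matters; the paper does this by applying the automorphism $\alpha\mapsto\beta$ and comparing absolute values, getting $1<d\cdot 10^{m}/9=|c_\beta|^{\ell}|\beta|^{\ell n+\ell(\ell-1)/2}<1$, a contradiction.

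A last quantitative remark: your error term decays like $\alpha^{-n/2}$, whereas the paper tracks the sharper rate $\alpha^{-3n/2}$ (the errors $e_{n+j}$ contribute at relative size $\alpha^{-3n/2}$, the shift $+1$ at relative size $\alpha^{-n}$). With your weaker exponent the same Matveev computation yields a bound roughly three times larger, so the specific constant $2.4\times10^{16}$ of the lemma does not automatically come out; to reproduce it you should keep the sharper exponent and the explicit data the paper uses ($h(c_\alpha)=\tfrac13\log 44$, $A_1=40$, $A_2=0.7$, $A_3=7$, $B$ of size $7n+28$). This does not affect the viability of your argument, since any bound of this order feeds into the subsequent de Weger reduction in the same way.
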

	\begin{proof}
		By \eqref{E3}, for all $u\geq 1$ we have
		\begin{equation}\label{E3-2}
			T_{u}+1<\alpha^{u-1}+1<2\alpha^{u-1}<\alpha^{u+1}.
		\end{equation}
		Thus, by \eqref{main:eqn} we get
		\[
		10^{m-1}<d\left(\dfrac{10^m-1}{9}\right)=(T_n+1)(T_{n+1}+1)\cdots (T_{n+(\ell -1)}+1)< \alpha^{\ell n +\frac{\ell(\ell+1)}{2}}.
		\]
		Thus, we get
		\begin{equation}
			\label{cotam}
			m\leq \ell n +\ell(\ell+1)/2.
		\end{equation}
		Now, by \eqref{error}, we obtain that
		\begin{eqnarray}\label{product-Padovan}
			& & (T_{n}+1) \cdots (T_{n+(\ell-1)}+1) \nonumber \\ 
			&=&(c_{\alpha}\alpha^{n}+e_n+1)\cdots(c_{\alpha}\alpha^{n+(\ell-1)}+e_{n+\ell-1}+1)\\ \nonumber
			&=& c_{\alpha}^{\ell}\alpha^{\ell n+\ell(\ell-1)/2} + r_1(c_{\alpha},\alpha,n,\ell)\nonumber
		\end{eqnarray}
		where $r_1(c_{\alpha},\alpha,n,\ell)$ involves the part of the expansion of the previous line that contains the product of powers of $c_{\alpha},~\alpha$ and the errors $e_i$, for $i=n,\ldots n+(\ell-1)$.
		Moreover, $r_1(c_{\alpha},\alpha,n,\ell)$ is the sum of $2186$ terms with maximum absolute value $c_{\alpha}^{\ell-1}\alpha^{(\ell-1)n +\ell(\ell-1)/2}\alpha^{-n/2}$.
		
		The equality \eqref{product-Padovan} enables us to express \eqref{main:eqn} as
		\[
		\frac{d}{9}10^{m}-c_{\alpha}^{\ell}\alpha^{\ell n+\ell(\ell-1)/2}=\frac{d}{9}+r_1(c_{\alpha},\alpha,n,\ell).
		\]
		Dividing by $c_{\alpha}^{\ell}\alpha^{\ell n+\ell(\ell-1)/2}$ and taking the absolute value, we deduce that
		\begin{align}
			\label{Gamma1}
			\left|\Gamma_1\right|&\leq \left(\frac{d}{9}+|r_1(c_{\alpha},\alpha,n,\ell)|\right)\cdot c_{\alpha}^{-\ell}\alpha^{-(\ell n+\ell(\ell-1)/2)}\nonumber\\
			&< (1+2186c_{\alpha}^{\ell-1}\alpha^{(\ell-1)n+\ell(\ell-1)/2}\alpha^{-n/2})\cdot c_{\alpha}^{-\ell}\alpha^{-(\ell n+\ell(\ell-1)/2)}\\
			&\leq 2187c_{\alpha}^{-1}\alpha^{-3n/2}<11964\alpha^{-3n/2},\nonumber
		\end{align}
		where 
		\begin{equation}
			\Gamma_1=\frac{d}{9c_{\alpha}^{\ell}}\alpha^{-(\ell n+\ell(\ell-1)/2)}10^{m}-1.
		\end{equation}
		To find a lower bound for $\Gamma_1$, we take the parameters $s:=3,$
		\[
		(\eta_{1}, b_1):=((d/9)c_{\alpha}^{-\ell},1), ~~ (\eta_{2},b_2):=(\alpha,-(\ell n+\ell(\ell-1)/2))\quad {\rm and} \quad(\eta_{3},b_3):=(10,m),
		\]
		in Theorem \ref{lem:Matveev}. For our choices, we have $\mathbb{L}:=\mathbb{Q}(\alpha)$, which has $d_{\mathbb{L}}:=3$. To apply Theorem \ref{lem:Matveev} , it is necessary to show that $\Gamma\neq 0$. If we assume the contrary, we get
		\[
		d\cdot10^{m}/9=c_{\alpha}^{\ell}\alpha^{\ell n+\ell(\ell-1)/2}.
		\]
		Conjugating the above relation by the automorphism $\sigma :=( \alpha \beta)$ and then taking absolute values on both sides of the resulting equality, we obtain
		\[
		1<d\cdot10^{m}/9=|c_{\beta}|^{\ell}|\beta|^{\ell n+\ell(\ell-1)/2}<1.
		\]
		This is a contradiction. Thus, $\Gamma_1\neq 0$. Next, we give estimates to $A_i$ for $1,2,3$. $h(\eta_{1})\leq h(d)+h(9)+\ell h(c_{\alpha})\leq 2\log 9+\ell h(c_{\alpha})$, $h(\eta_{2})=\frac{1}{3}\log \alpha$ and $h(\eta_{3})=\log 10$. Now we need to estimate $h(c_{\alpha})$. For it, the minimal polynomial of $c_{\alpha}$ is $44X^{3}-4X-1$. So, $h(c_{\alpha})=\frac{1}{3}\log 44$ and $h(\eta_{1})\leq 2\log 9+\frac{7}{3}\log 44$. Thus, we take $A_{1}:=40$, $A_{2}:=0.7$ and $A_{3}:=7$. Finally, by \eqref{cotam} and the fact that $\ell\leq 7$ we take $B:=8 n +28$. Applying Matveev's theorem, we get a lower bound for $|\Gamma_1|$, which, by comparing it to \eqref{Gamma1}, leads to
		$$
		\frac{3n}{2}\log \alpha -\log 11964<5.31\times 10^{14}(1+\log(7n+28)).\\
		$$
		Hence,
		\[
		n<5.81\times 10^{14}(1+\log (7n+28)).
		\]
		Therefore, by using Maple, we obtain $n<2.4\times 10^{16}$. 
	\end{proof}
	
	\subsection{Reducing $ n$}
	To lower the bound of $n$, we will use  Lemma \ref{lem5}.
	
	Let 
	$$
	\Lambda_1 :=m\log 10-(\ell n+\ell(\ell-1)/2)\log \alpha + \log (d/9c_{\alpha}^{\ell}).$$
	Therefore, \eqref{Gamma1} can be rewritten as $|e^{\Lambda_1}-1| < 11964\alpha^{-3n/2}$. Further, if $n\geq 15$ then $|\Gamma_1|<0.02$. So by applying Lemma \ref{lem:Weger}, we deduce that
	$$
	|\Lambda_1|<-\dfrac{\log(1-0.02)}{0.02} |\Gamma_1|< 12086 \exp\left( -0.6 n\right).
	$$
	Taking
	$$
	\vartheta_1:=-\log\alpha,\quad \vartheta_2:=\log 10,\quad \psi:= \log \left(\dfrac{d}{9c_{\alpha}}\right),\quad c:=12086,\quad \delta:=0.6$$
	in Lemma \ref{lem5}. Further, as $\max\{m,\ell n+\ell (\ell -1)/2\}<1.7\times 10^{17}$, then we  take $X_0=1.7\times 10^{17}$.
	Using \texttt{Maple}, we find that
	\[
	q_{42}=152414933276058910307
	\]
	satisfies  the conditions of Lemma \ref{lem5} for all $1\leq a\leq 9$ and $1\leq\ell\leq 7$. Therefore, by Lemma \ref{lem5} Diophantine equation \eqref{main:eqn} has solutions then 
	$$
	n\leq \dfrac{1}{0.6} \times \log\left(\dfrac{152414933276058910307^2 \times 12086}{\log 10 \times 1.7\times 10^{17}}\right)< 104.
	$$ 
	Now, we will proceed a second reduction of the bound of $n$. In this application of Lemma \ref{lem5}, we take $X_0=749$ and find that $q_{12}=686323$ verifies the conditions of Lemma \ref{lem5}. Thus, we obtain
	$$
	n\leq \dfrac{1}{0.6} \times \log\left(\dfrac{686323^2 \times 12086}{\log 10 \times 749}\right)<49.
	$$ 
	Hence, it remains to check equation \eqref{main:eqn}, for $1\leq n \leq 48$, $1\leq \ell \leq 7$, $2\leq m \leq 364$ and $1\leq d \leq 9$.  A quick inspection using Maple reveals that the Diophantine equation \eqref{main:eqn} has no solutions. This completes the proof of Theorem \ref{main:thm}.
\end{proof}

\section{Proof of Theorem \ref{main:thm2}}
\begin{proof}
	
	In this section, we will prove our second main result using the same method as for the proof of Theorem \ref{main:thm}.
	\subsection{Absolute Bounds on the Variables}
	First of all, we give the number of factors in the Diophantine equation \eqref{main:eqn2}. 
	\begin{lemma}\label{lem:maj-k}
		If the Diophantine equation \eqref{main:eqn2} has a solution  then $\ell\leq 6$
	\end{lemma}
	\begin{proof}
		Let $x\in \left\{0,1,2,\ldots,7\right\}$ such that $n\equiv x\pmod{8}$.  Assume that $x=1$, hence $n=8k+1$.  So by Lemma~\ref{orderp} we obtain
		$$
		\upsilon_2(T_n-1)= \upsilon_2((n-1)(n+7))-3=\upsilon_2(8k(8k+8))\geq 4.
		$$
		We proceed as in this case to show the other results in Table \ref{Table2}.
		
		\begin{table}[!h]
			\centering
			\begin{tabular}[t]{|l|l|l|}
				\hline
				$\ell$ & $x$ & $\upsilon_{2}((T_{n}-1)(T_{n+1}-1)\cdots (T_{n+(\ell-1)}-1))$ \\
				\hline
				1 & 1 & $\geq 4$\\
				\hline
				2 & 0 & $\geq 4$\\
				\hline
				4 & 6, 7 & $\geq 5$, $\geq 5$\\
				\hline
				5 & 2, 5 & $\geq 6$, $\geq 6$\\
				\hline
				6 & 4 & $\geq 6$\\
				\hline
				7 & 3 & $\geq 6$\\
				\hline
			\end{tabular}
			\caption{2-adic order of product of consecutive shifted Tribonacci numbers on the form $T_i-1$}
			\label{Table2}
		\end{table}
		
		As $\upsilon_{2}\left(d\left(\frac{10^{m}-1}{9}\right)\right) = \upsilon_{2}(d) \leq 3$ for all $1\leq d\leq 9$, it then follows from Table \ref{Table2} that $\ell\leq 6$.
	\end{proof}
	Now, we will show the following lemma.
	\begin{lemma}\label{redn-2}
		If $(n,\ell,m,d)$ is a positive integer solution of \eqref{main:eqn2} with $n\geq 20$, $m\geq 2$, $1\leq d\leq 9$ and $1\leq \ell \leq 6$ then
		\[
		m\leq \ell n+\ell (\ell-3)/2\quad \text{and}\quad	n < 2.4\times 10^{16}.
		\]
	\end{lemma}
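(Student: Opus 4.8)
The plan is to follow the proof of Lemma \ref{redn} essentially verbatim, replacing each factor $T_{n+i}+1$ by $T_{n+i}-1$. Since $|{-1}|=|{+1}|$, every analytic estimate carries over unchanged, and only the exponent bookkeeping is different. First I would bound $m$: by \eqref{E3} one has $T_u-1\le\alpha^{u-1}-1<\alpha^{u-1}$ for $u\ge1$, so \eqref{main:eqn2} gives
$$
10^{m-1}<d\Big(\frac{10^m-1}{9}\Big)=(T_n-1)\cdots(T_{n+\ell-1}-1)<\alpha^{(n-1)+n+\cdots+(n+\ell-2)}=\alpha^{\ell n+\ell(\ell-3)/2}.
$$
As $\log\alpha<\log10$ and $\ell n+\ell(\ell-3)/2$ is an integer, this forces $m\le\ell n+\ell(\ell-3)/2$, which is the first asserted bound.

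Next I would set up a linear form in logarithms. Writing $T_{n+i}-1=c_{\alpha}\alpha^{n+i}+(e_{n+i}-1)$ from \eqref{error} and multiplying out exactly as in \eqref{product-Padovan},
$$
(T_n-1)\cdots(T_{n+\ell-1}-1)=c_{\alpha}^{\ell}\alpha^{\ell n+\ell(\ell-1)/2}+r_1(c_{\alpha},\alpha,n,\ell),
$$
where $r_1$ is a sum of at most $3^{\ell}-1\le 2186$ terms involving powers of $c_{\alpha},\alpha$ and the errors $e_i$, which I would bound exactly as in the proof of Lemma \ref{redn}. Using $(T_n-1)\cdots(T_{n+\ell-1}-1)=\tfrac{d}{9}10^m-\tfrac{d}{9}$, dividing by $c_{\alpha}^{\ell}\alpha^{\ell n+\ell(\ell-1)/2}$ and taking absolute values, I obtain
$$
|\Gamma_2|<C_0\,\alpha^{-3n/2},\qquad \Gamma_2:=\frac{d}{9\,c_{\alpha}^{\ell}}\,\alpha^{-(\ell n+\ell(\ell-1)/2)}\,10^m-1,
$$
for a suitable explicit constant $C_0$ (of the same size as the constant $11964$ appearing in \eqref{Gamma1}); here $n\ge20$ is used to absorb the $\tfrac{d}{9}$ term.

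Then I would apply Matveev's theorem (Lemma \ref{lem:Matveev}) with $s=3$ and
$$
(\eta_1,b_1)=\big((d/9)c_{\alpha}^{-\ell},\,1\big),\qquad(\eta_2,b_2)=\big(\alpha,\,-(\ell n+\ell(\ell-1)/2)\big),\qquad(\eta_3,b_3)=(10,\,m).
$$
Nonvanishing of $\Gamma_2$ is checked as in Lemma \ref{redn}: if $\Gamma_2=0$ then $d\cdot10^m/9=c_{\alpha}^{\ell}\alpha^{\ell n+\ell(\ell-1)/2}$, and applying the automorphism $\alpha\mapsto\beta$ gives $d\cdot10^m/9=|c_{\beta}|^{\ell}|\beta|^{\ell n+\ell(\ell-1)/2}<1$, contradicting $d\cdot10^m/9\ge100/9>1$ for $m\ge2$. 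The heights are as in Lemma \ref{redn}: $D=3$, $h(\eta_2)=\tfrac13\log\alpha$, $h(\eta_3)=\log10$, and $h(\eta_1)\le2\log9+2\log44$ since the minimal polynomial of $c_{\alpha}$ is $44X^3-4X-1$, so one may take $A_1=40$, $A_2=0.7$, $A_3=7$, and — using $\ell\le6$ together with the bound on $m$ — $B=6n+15$. Comparing Matveev's lower bound for $|\Gamma_2|$ with the upper bound $C_0\alpha^{-3n/2}$ gives an inequality $\tfrac{3n}{2}\log\alpha-\log C_0<5.31\times10^{14}\big(1+\log(6n+15)\big)$, hence $n<c_1\big(1+\log(6n+15)\big)$ for an explicit $c_1\approx6\times10^{14}$, and a Maple computation then yields $n<2.4\times10^{16}$.

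I do not expect a genuine obstacle here: the whole argument is parallel to Lemma \ref{redn}. The two points requiring care are the bookkeeping of the two different exponents — the crude bound produces $\ell n+\ell(\ell-3)/2$, which controls $m$, whereas the Binet main term carries $\ell n+\ell(\ell-1)/2$, which enters $\Gamma_2$ and Matveev — and checking that replacing $+1$ by $-1$ affects neither the nonvanishing argument nor the order of $r_1$, which it does not, since only $|{-1}|=1$ enters those estimates.
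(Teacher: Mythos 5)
Your proposal follows the paper's own proof of this lemma essentially verbatim: the same crude bound via \eqref{E3} giving $m\leq \ell n+\ell(\ell-3)/2$, the same Binet expansion isolating the main term $c_{\alpha}^{\ell}\alpha^{\ell n+\ell(\ell-1)/2}$, the same linear form $\Gamma_2$, the same nonvanishing argument via the conjugation $\alpha\mapsto\beta$, and Matveev's theorem with the identical parameters $A_1=40$, $A_2=0.7$, $A_3=7$, $B=6n+15$, leading to $n<2.4\times 10^{16}$. The only cosmetic difference is that you bound the number of error terms by $3^{\ell}-1\leq 2186$ where the paper uses $728$ (from $\ell\leq 6$), which merely changes the explicit constant and not the final bound.
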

	\begin{proof}
		First, assume that $n\geq 20$. Combining \eqref{main:eqn2} and \eqref{E3}, we obtain
		\[
		10^{m-1}<\dfrac{d(10^m-1)}{9}=(T_n-1)(T_{n+1}-1)\cdots (T_{n+(\ell-1)}-1)<\alpha^{\ell n+\frac{\ell(\ell-3)}{2}}.
		\]
		Thus,
		\begin{equation}
			\label{cotam-2}
			m\leq\ell n +\ell(\ell-3)/2.
		\end{equation}
		Now, by \eqref{error}, we get that
		\begin{eqnarray}\label{product-Perrin}
			(T_{n}-1) \cdots (T_{n+(\ell-1)}-1)&=&(c_{\alpha}\alpha^{n}+e_n-1)\cdots(c_{\alpha}\alpha^{n+(\ell-1)}+e_{n+\ell-1}-1)\\
			&=&c_{\alpha}^{\ell}\alpha^{\ell n+\ell(\ell-1)/2} + r_2(\alpha,n,\ell)\nonumber
		\end{eqnarray}
		where $r_2(c_{\alpha},\alpha,n,\ell)$ involves the part of the expansion of the previous line that contains the product of powers of $~\alpha$ and the errors $e_i$, for $i=n,\ldots n+(\ell-1)$. Moreover, $r_2(c_{\alpha},\alpha,n,\ell)$ is the sum of $728$ terms with maximum absolute value $c_{\alpha}^{\ell-1}\alpha^{(\ell-1)n +\ell(\ell-1)/2}\alpha^{-n/2}$.
		
		Using equation \eqref{product-Perrin}, we express equation \eqref{main:eqn2} into the form
		\[
		\frac{d}{9}10^{m}-c_{\alpha}^{\ell}\alpha^{\ell n+\ell(\ell-1)/2}=\frac{d}{9}+r_2(\alpha,n,\ell).
		\]
		Multiplying both sides by $c_{\alpha}^{-\ell}\alpha^{-(\ell n+\ell(\ell-1)/2)}$ and taking the absolute value, we see that
		\begin{align}
			\label{cota1-2}
			\left|\Gamma_2\right|&\leq \left(\frac{d}{9}+|r_2(c_{\alpha},\alpha,n,\ell)|\right)\cdot c_{\alpha}^{-\ell}\alpha^{-(\ell n+\ell(\ell-1)/2)}\\
			&< (1+728c_{\alpha}^{\ell-1}\alpha^{(\ell-1)n+\ell(\ell-1)/2}\alpha^{-n/2})\cdot c_{\alpha}^{-\ell}\cdot \alpha^{-(\ell n+\ell(\ell-1)/2)}\nonumber\\
			&\leq729c_{\alpha}^{-1}\alpha^{-3n/2}<3988\alpha^{-3n/2},\nonumber
		\end{align}
		where 
		\begin{equation}
			\Gamma_2=\frac{d}{9c_{\alpha}}\alpha^{-(\ell n+\ell(\ell-1)/2)}10^{m}-1.
		\end{equation}
		Now, we will use Matveev's theorem to find a lower bound for $\Gamma_2$, with the parameters
		$$
		s:=3, ~~ (\eta_{1}, b_1):=((a/9c_{\alpha}),1), 
		$$
		$$ (\eta_{2},b_2):=(\alpha,-(\ell n+\ell(\ell-1)/2))\quad {\rm and} \quad(\eta_{3},b_3):=(10,m).
		$$
		The number field containing $\eta_{1},\eta_{2},\eta_{3}$ is $\mathbb{L}:=\mathbb{Q}(\alpha)$, which has $D:=3$. 
		As above, one can justify that $\Gamma_2\neq 0$. Also, we can take $A_{1}:=40$, $A_{2}:=0.7$ and $A_{3}:=7$. According to \eqref{cotam-2} and the fact that $\ell\leq 6$, we take $B:=6 n +15$.  Applying Matveev's theorem we get a lower bound for $|\Gamma_2|$ and taking into account inequality \eqref{cota1-2}, we obtain
		\[
		\exp\left(-5.31\times 10^{14}(1+\log (6n+15)\right)<\frac{3988}{\alpha^{3n/2}}.
		\]
		Taking the logarithm of both sides of the above inequality, we get
		$$
		\frac{3n}{2}\log \alpha -\log 3988<5.31\times 10^{14}(1+\log(6n+15)).\\
		$$
		Hence,
		\[
		n<5.81\times 10^{14}(1+\log (6n+15)).
		\]
		Solving the above inequality gives $n<2.4\times 10^{16}$.
	\end{proof}
	
	\subsection{Reducing $n$}
	To lower the bound of $n$, we will use  Lemmas  \ref{lem4} and \ref{lem5}, i.e. we will reduce this bound twice.
	
	Let 
	$$
	\Lambda_2 :=m\log 10-(\ell n+\ell(\ell-1)/2)\log \alpha + \log (d/9c_{\alpha}).
	$$
	Therefore, inequality \eqref{cota1-2} can be rewritten as $|e^{\Lambda_2}-1| < 3988\alpha^{-3n/2}$. Furthermore, if $n\geq 15$ then $|\Gamma_1|<0.005$. We apply Lemma \ref{lem:Weger} to obtain
	$$
	|\Lambda_2|<-\dfrac{\log(1-0.005)}{0.005} |\Gamma_2|< 3999 \exp\left( -0.6 n\right).
	$$
	Put
	$$
	\vartheta_1:=-\log\alpha,\quad \vartheta_2:=\log 10,\quad \psi:= \log \left(\dfrac{d}{9c_{\alpha}}\right),\quad c:=12086,\quad \delta:=0.6.$$
	Moreover, as $\max\{m,\ell n+\ell (\ell -1)/2\}<1.5\times 10^{17}$, then we take $X_0=1.5\times 10^{17}$.
	With \texttt{Maple}, we check that
	\[
	q_{42}=152414933276058910307
	\]
	satisfies  the conditions of Lemma \ref{lem5} for all $1\leq a\leq 9$ and $1\leq\ell\leq 6$. Therefore, Lemma \ref{lem5} tells us that if Diophantine equation \eqref{main:eqn2} has solutions then 
	$$
	n\leq \dfrac{1}{0.6} \times \log\left(\dfrac{152414933276058910307^2 \times 3999}{\log 10 \times 1.5\times 10^{17}}\right)< 102.
	$$ 
	Now, we will reduce again the new bound of $n$. We take $X_0=621$ and see that $q_{12}=686323$ satisfies the conditions of Lemma \ref{lem5}. Thus, we obtain
	$$
	n\leq \dfrac{1}{0.6} \times \log\left(\dfrac{686323^2 \times 3999}{\log 10 \times 621}\right)<47.
	$$ Hence, it remains to check equation \eqref{main:eqn} for $1\leq n \leq 46$, $1\leq \ell \leq 6$, $2\leq m \leq 291$ and $1\leq d \leq 9$.  By a fast computation with  Maple in these ranges, we conclude that Diophantine equation \eqref{main:eqn2} has no solutions. This completes the proof of Theorem \ref{main:thm2}.
\end{proof}

\section{Proofs of Theorems \ref{main:thm3} and \ref{main:thm4} }
\begin{proof}
	Using the above method, we will show only Theorem \ref{main:thm3} because the proof of Theorem \ref{main:thm4} is similar. 
	\subsection{Absolute Bounds on the Variables}
	As seen before we claim that $k\leq 6$ and $\ell\leq 7$. Otherwise Diophantine equation \eqref{main:eqn3} has no solutions. This is a direct result from Lemmas \ref{lem:maj-l} and \ref{lem:maj-k}.
	
	Now, we will prove the following lemma.
	\begin{lemma}\label{redn-3}
		If $(n,\ell,k,m,d)$ is a positive integer solution of \eqref{main:eqn3} with $n\geq 30$, $m\geq 2$, $1\leq d\leq 9$, $1\leq k\leq 6$ and $1\leq \ell \leq 7$, then
		\[
		m\leq(k+\ell) n+\frac{k(k-3)}{2}+\frac{\ell(2k+\ell+1)}{2} \quad \text{and}\quad	n < 3.8\times 10^{16}.
		\]
	\end{lemma}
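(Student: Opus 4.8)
The plan is to mirror closely the two-part structure of Lemmas~\ref{redn} and~\ref{redn-2}, since equation \eqref{main:eqn3} is simply a product of a left-shifted block of length $k$ and a right-shifted block of length $\ell$. First I would record the size of the full product. Using \eqref{E3} together with the elementary inequality $T_u+1<\alpha^{u+1}$ from \eqref{E3-2} and the companion estimate $T_u-1<\alpha^{u-1}$ (valid for $u$ large, as already used in \eqref{cotam-2}), the product $(T_n-1)\cdots(T_{n+k-1}-1)(T_{n+k}+1)\cdots(T_{n+k+\ell-1}+1)$ is bounded above by $\alpha^{E}$, where $E=(k+\ell)n+\tfrac{k(k-3)}{2}+\tfrac{\ell(2k+\ell+1)}{2}$: the first block contributes exponents $n,\dots,n+k-1$ shifted down by one each (giving $kn+\tfrac{k(k-3)}{2}$), and the second block contributes exponents $n+k,\dots,n+k+\ell-1$ shifted up by one each (giving $\ell n+\ell k+\tfrac{\ell(\ell+1)}{2}+\ell=\ell n+\tfrac{\ell(2k+\ell+1)}{2}$). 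Since $10^{m-1}<d(10^m-1)/9$ equals this product, comparing exponents yields $m\le E$, which is the first claimed inequality.

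Next I would set up the linear form in logarithms exactly as before. Expanding each factor via \eqref{error} as $c_\alpha\alpha^{s}+e_s\pm1$, the leading term of the product is $c_\alpha^{k+\ell}\alpha^{(k+\ell)n+(k+\ell)(k+\ell-1)/2}$ (the total exponent on $\alpha$ from the $c_\alpha\alpha^s$ terms is $\sum_{s=n}^{n+k+\ell-1}s$), and the remainder $r_3$ is a sum of at most $3^{k+\ell}\le 3^{13}$ terms each bounded in absolute value by $c_\alpha^{k+\ell-1}\alpha^{(k+\ell-1)n+(k+\ell)(k+\ell-1)/2}\alpha^{-n/2}$. Rewriting \eqref{main:eqn3} as $\tfrac d9 10^m-c_\alpha^{k+\ell}\alpha^{(k+\ell)n+(k+\ell)(k+\ell-1)/2}=\tfrac d9+r_3$, dividing through by $c_\alpha^{k+\ell}\alpha^{(k+\ell)n+(k+\ell)(k+\ell-1)/2}$ and taking absolute values gives $|\Gamma_3|<C\alpha^{-3n/2}$ for an explicit constant $C$ (a small multiple of $3^{13}c_\alpha^{-1}$), where
\[
\Gamma_3=\frac{d}{9c_\alpha^{k+\ell}}\,\alpha^{-((k+\ell)n+(k+\ell)(k+\ell-1)/2)}10^m-1.
\]
To apply Lemma~\ref{lem:Matveev} I take $s:=3$, $(\eta_1,b_1):=((d/9)c_\alpha^{-(k+\ell)},1)$, $(\eta_2,b_2):=(\alpha,-((k+\ell)n+(k+\ell)(k+\ell-1)/2))$, $(\eta_3,b_3):=(10,m)$, with $D:=3$. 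Non-vanishing of $\Gamma_3$ follows by the same conjugation argument: if $\Gamma_3=0$ then applying $\sigma=(\alpha\ \beta)$ and taking absolute values gives $1<d\cdot10^m/9=|c_\beta|^{k+\ell}|\beta|^{(k+\ell)n+(k+\ell)(k+\ell-1)/2}<1$, a contradiction. Using $h(c_\alpha)=\tfrac13\log 44$ and $k+\ell\le 13$, I can take $A_1$ a fixed constant (of the shape $3\,(2\log9+\tfrac{13}{3}\log44)$, rounded up), $A_2:=0.7$, $A_3:=7$, and $B:=(k+\ell)n+(k+\ell)(k+\ell-1)/2\le 13n+78$. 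Matveev's bound compared with $|\Gamma_3|<C\alpha^{-3n/2}$ then gives $\tfrac{3n}{2}\log\alpha-\log C<C'(1+\log(13n+78))$ for an explicit $C'$, and solving this with Maple yields $n<3.8\times10^{16}$, which is the second claimed inequality.

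I do not expect any genuine obstacle here — the argument is structurally identical to Lemmas~\ref{redn} and~\ref{redn-2}. The only point requiring a little care is bookkeeping of the exponent $E$: one must correctly combine the downward shifts in the $(T_i-1)$ block with the upward shifts in the $(T_i+1)$ block, and verify that the leading exponent $(k+\ell)n+(k+\ell)(k+\ell-1)/2$ arising from the $c_\alpha\alpha^s$ terms does not depend on how the window is split (it is just $\sum_{s=n}^{n+k+\ell-1}s$), so that the linear form $\Gamma_3$ has the stated shape. One also needs $n$ large enough (here $n\ge30$ suffices, as stated) so that every factor $T_{n+i}-1$ is positive and the bound $T_u-1<\alpha^{u-1}$ applies to all factors in the first block; this is why the hypothesis $n\ge30$ appears. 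Everything else is the routine Matveev estimate and a Maple computation.
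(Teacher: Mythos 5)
Your proposal is correct and follows essentially the same route as the paper: the exponent bookkeeping via \eqref{E3} and \eqref{E3-2} giving $m\le(k+\ell)n+\tfrac{k(k-3)}{2}+\tfrac{\ell(2k+\ell+1)}{2}$, the Binet expansion with leading term $c_\alpha^{k+\ell}\alpha^{(k+\ell)n+(k+\ell)(k+\ell-1)/2}$ (the paper writes this exponent as $k(k-1)/2+\ell(2k+\ell-1)/2$, which is the same quantity) and a remainder of at most $3^{13}$ terms, and then Matveev with $D=3$, the same conjugation argument for $\Gamma_3\neq0$, and the same heights $A_1\approx62.4$, $A_2=0.7$, $A_3=7$. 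The only small slip is your choice $B:=13n+78$: since $B$ must also dominate $m$, and $m$ can be as large as $13n+79$ when $(k,\ell)=(6,7)$ (indeed $E-|b_2|=\ell-k$), you should take $B$ slightly larger, as the paper does with $B:=13n+85$; this changes nothing in the final bound $n<3.8\times10^{16}$.
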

	\begin{proof}
		First, assume that $n\geq 30$. Combining \eqref{main:eqn3}, \eqref{E3} and \eqref{E3-2}, we get
		\[
		\begin{array}{lcl}
			&& 10^{m-1}<\dfrac{d(10^m-1)}{9}\\
			&=&(T_n+1)(T_{n+1}+1)\cdots (T_{n+(k-1)}+1)(T_{n+k}-1)\cdots (T_{n+k+\left(\ell -1\right)}-1)\\
			&<&\alpha^{(k+\ell) n+\frac{k(k-3)}{2}+\frac{\ell(2k+\ell+1)}{2}}.
		\end{array}
		\]
		Thus, we get
		\begin{equation}
			\label{cotam-3}
			m\leq (k+\ell) n+\frac{k(k-3)}{2}+\frac{\ell(2k+\ell+1)}{2}.
		\end{equation}
		Now, by \eqref{error}, we get that
		\begin{eqnarray}\label{product-Perrin-Padovan}
			& & (T_n-1)\cdots (T_{n+(k-1)}-1)(T_{n+k}+1)\cdots (T_{n+k+\left(\ell -1\right)}+1)\\
			&=&                           \alpha^{(k+\ell) n+k(k-1)/2+\ell(2k\ell-1)/2} + r_3(c_{\alpha},\alpha,n,\ell),\nonumber
		\end{eqnarray}
		where $r_3(c_{\alpha},\alpha,n,\ell,k)$ involves the part of the expansion of the previous line that contains the product of powers of $c_{\alpha}~\alpha$ and the errors $e_{i}$ for $i=n,\ldots n+k,\ldots, n+k+(\ell-1)$.
		Moreover, $r_3(c_{\alpha},\alpha,n,\ell)$ is the sum of $1594322$ terms with maximum absolute value $c_{\alpha}^{k+\ell-1}\alpha^{(k+\ell-1) n+k(k-1)/2+\ell(2k+\ell-1)/2}\alpha^{-n/2}$.
		
		Using equation \eqref{product-Perrin-Padovan},  we write \eqref{main:eqn3} into the form
		\[
		\frac{d}{9}10^{m}-c_{\alpha}^{\ell+k}\alpha^{(k+\ell) n+k(k-1)/2+\ell(2k+\ell-1)/2}=\frac{d}{9}+r_3(c_{\alpha},\alpha,n,\ell).
		\]
		Multiplying through by $c_{\alpha}^{-k-\ell}\alpha^{-((k+\ell) n+k(k-1)/2+\ell(2k+\ell-1)/2)}$ and taking the absolute value, we deduce that
		\begin{align}
			\label{cota1-3}
			\left|\Gamma_3\right|&\leq  \left(\frac{d}{9}+|r_3(c_{\alpha},\alpha,n,\ell)|\right)\cdot \alpha^{-((k+\ell) n+k(k-1)/2+\ell(2k+\ell-1)/2)}\nonumber\\
			&< (1+1594322c_{\alpha}^{k+\ell-1}\alpha^{(k+\ell-1) n+k(k-1)/2+\ell(2k+\ell-1)/2}\alpha^{-n/2})\\
			&\cdot c_{\alpha}^{-k-\ell} \alpha^{-((k+\ell) n+k(k-1)/2+\ell(2k+\ell-1)/2)}\nonumber\\
			&\leq1594323c_{\alpha}^{-1}\alpha^{-3n/2}<8721506\alpha^{-3n/2},\nonumber
		\end{align}
		where 
		\begin{equation}
			\Gamma_3=\frac{d}{9c_{\alpha}^{k+\ell}}\alpha^{(k+\ell) n+k(k-1)/2+\ell(2k+\ell-1)/2}10^{m}-1.
		\end{equation}
		Now, we will apply Matveev's theorem to find a lower bound for $\Gamma_3$.
		Put
		$$
		s:=3,\;\; (\eta_{1}, b_1):=((d/9c_{\alpha}^{k+\ell}),1),
		$$
		$$ (\eta_{2},b_2):=(\alpha,-((k+\ell) n+k(k-1)/2+\ell(2k\ell-1)/2)), ~~ (\eta_{3},b_3):=(10,m).
		$$
		The number field containing $\eta_{1},\eta_{2},\eta_{3}$ is $\mathbb{L}:=\mathbb{Q}(\alpha)$, which has $D:=3$. 
		As before, one can prove that $\Gamma_3=\eta_{1}^{b_{1}}\eta_{2}^{b_{2}}\eta_{3}^{b_{3}}-1 \neq 0$. 
		Next, $h(\eta_{1})\leq h(d)+h(9)+(k+\ell)h(c_{\alpha})\leq 2\log 9+\frac{13\log44}{3}$, $h(\eta_{2})=\frac{1}{3}\log \alpha$ and $h(\eta_{3})=\log 10$. Thus, we can take $A_{1}:=62.4$, $A_{2}:=0.7$ and $A_{3}:=7$. According to \eqref{cotam-3} and the facts $k\leq 6$ and $\ell \leq 7$, we take $B:=13n+85$. Applying Matveev's theorem, we get a lower bound for $|\Gamma_3|$, which in comparison to \eqref{cota1-3}, gives
		\[
		\exp\left(-8.27\times 10^{14}(1+\log (13n+85)\right)<\frac{8721506}{\alpha^{3n/2}}.
		\]
		We take the logarithm of both sides of the above inequality to obtain
		$$
		\frac{3n}{2}\log \alpha -\log 8721506<8.27\times 10^{14}(1+\log(13n+85)).\\
		$$
		Hence, we deduce that
		\[
		n<9.05\times 10^{14}(1+\log (13n+85)).
		\]
		Therefore, we obtain $n<3.8\times 10^{16}$.
	\end{proof}
	
	\subsection{Reducing $n$}
	To lower the bound of $n$, we will use twice Lemma \ref{lem5} to reduce the above bound of $n$.
	
	Let 
	$$
	\Lambda_3 :=m\log 10-((k+\ell) n+k(k-1)/2+\ell(2k+\ell-1)/2)\log \alpha + \log (d/9c_{\alpha}^{\ell+k}).$$
	Therefore, inequality \eqref{cota1-3} can be rewritten as $|e^{\Lambda_3}-1| < 8721506 \alpha^{-3n/2}$. Furthermore, if $n\geq 25$ then $|\Gamma_3|<0.002$. So, by applying Lemma \ref{lem:Weger}, we deduce that
	$$
	|\Lambda_3|<-\dfrac{\log(1-0.002)}{0.002} |\Gamma_3|< 8730240 \exp\left( -0.6 n\right).
	$$
	Put
	$$
	\vartheta_1:=-\log\alpha,\quad \vartheta_2:=\log 10,\quad \psi:= \log \left(\dfrac{d}{9c_{\alpha}^{k+\ell}}\right),\quad c:=8730240,\quad \delta:=0.6.$$
	Moreover, as $\max\{m,(k+\ell) n+k(k-1)/2+\ell(2k\ell-1)/2\}<5\times 10^{17}$, then we consider $X_0=5\times 10^{17}$.
	
	With the help of \texttt{Maple}, we realize that
	\[
	q_{43}=3468665590923027810230
	\]
	verifies  the conditions of Lemma \ref{lem5} for all $1\leq a\leq 9$, $1\leq k\leq 6$ and $1\leq\ell\leq 7$. Therefore, we apply Lemma \ref{lem5} to see that if Diophantine equation \eqref{main:eqn3} has solutions then 
	$$
	n\leq \dfrac{1}{0.6} \times \log\left(\dfrac{3468665590923027810230^2 \times 8730240}{\log 10 \times 5.4\times 10^{17}}\right)< 123.
	$$ 
	Now, for the second reduction of the bound of $n$,  we apply again Lemma \ref{lem5} by taking $X_0=1793$ and $q_{14}=9120227$, satisfying the conditions of Lemma \ref{lem5}. Thus, we obtain
	$$
	n\leq \dfrac{1}{0.6} \times \log\left(\dfrac{9120227^2 \times 8730240}{\log 10 \times 1793}\right)<67.
	$$
	Hence, the final step consists of checking equation \eqref{main:eqn3}, for $1\leq n \leq 68$, $1\leq k\leq 6$, $1\leq \ell \leq 7$, $2\leq m \leq 1009$ and $1\leq d \leq 9$. This is done by a simple routine written in Maple, which (in a few minutes) doesn't return any solution of the Diophantine equation \eqref{main:eqn3}. This completes the proof of Theorem \ref{main:thm3}.
\end{proof}

\vskip20pt\noindent {\bf Acknowledgements.} First auther thanks AMS-Simons Foundation for their generous funding. This paper was completed while the third author was visiting Xavier University of Louisiana. He thanks the institution for the great working environment, the hospitality and the support.

\end{document}